\newcommand{\chapeau}{{\rlap{\smash{\hbox{\lower4pt\hbox{\hskip1pt$\widehat{\phantom{u}}$}}}}}\mbox{ }}
\DeclareSymbolFont{cyrletters}{OT2}{wncyr}{m}{n}
\DeclareMathSymbol{\sha}{\mathalpha}{cyrletters}{"58}
\font\tencyr=wncyr10
\def\cyr{\tencyr\cyracc}
\newcommand{\ba}{\mbox{\cyr B}}
 \newtheorem{thm}{Theorem}[subsection]
 \newtheorem*{thmA}{Theorem A}
 \newtheorem*{thmB}{Theorem B}
 \newtheorem*{pb*}{\textit{Question}}
 \newtheorem*{cor*}{\textit{Corollary}}
 \newtheorem{cor}[thm]{Corollary}
 \newtheorem{lem}[thm]{Lemma}
 \newtheorem{prop}[thm]{Proposition}
 \theoremstyle{definition}
 \newtheorem{defn}[thm]{Definition}
 \theoremstyle{remark}
 \theoremstyle{remark}
 \newtheorem{rem}[thm]{Remark}
 \numberwithin{equation}{subsection}
 \newcommand{\To}{\longrightarrow}
\begin{document}

\title[Arithmetic of 0-cycles]
{Arithmetic of 0-cycles on varieties defined over number fields}

\author{ Yongqi LIANG  }

\address{Yongqi LIANG \newline
Département de Mathématiques, \newline Bâtiment 425,\newline Université  Paris-sud 11,\newline  F-91405 Orsay,\newline
 France}

\email{yongqi.liang@math.u-psud.fr}

\thanks{\textit{Key words} : zero-cycles , Hasse principle, weak approximation,
Brauer-Manin obstruction, rationally connected varieties, homogeneous spaces}

\thanks{\textit{MSC 2010} : 14G25 (11G35, 14M22)}

\date{\today.}



\maketitle

\small
\textsc{Abstract.}
Let $X$ be a rationally connected algebraic variety, defined over a number field $k.$
We find a relation between the arithmetic of rational points on $X$ and the arithmetic
of zero-cycles. More precisely, we consider the following statements:
(1) the Brauer-Manin obstruction is the only obstruction to weak approximation for $K$-rational points on
$X_K$ for all finite extensions $K/k;$
(2) the Brauer-Manin obstruction is the only obstruction to weak approximation in some sense that we define
for zero-cycles of degree $1$ on $X_K$ for all finite extensions $K/k;$
(3) 
the sequence $$\varprojlim_n CH_0(X_K)/n\to\prod_{w\in\Omega_K}\varprojlim_nCH_0'(X_{K_w})/n\to Hom(Br(X_K),\mathbb{Q}/\mathbb{Z})$$
is exact for all finite extensions $K/k.$ We prove that (1) implies (2), and that (2) and (3) are equivalent.
We also prove a similar implication for the Hasse principle.

As an application, we prove the exactness of the sequence above for smooth compactifications of
certain homogeneous spaces of linear algebraic groups.

\bigskip

\scriptsize
\begin{center}\textbf{Arithmétique des 0-cycles pour certaines vari\'et\'es d\'efinies sur les corps de nombres}\end{center}

\textsc{Résumé.}
Soit $X$ une variété algébrique rationnellement connexe, définie sur un corps de nombres $k.$
On trouve, sur $X,$ un lien entre l'arithmétique des points rationnels et l'arithmétique des
zéro-cycles. Plus précisément, on considère les assertions suivantes:
(1) l'obstruction de Brauer-Manin est la seule à l'approximation faible pour les points $K$-rationnels sur
$X_K$ pour toute extension finie $K/k;$
(2) l'obstruction de Brauer-Manin est la seule à l'approximation faible (en un certain sens à préciser) pour les
zéro-cycles de degré $1$ sur $X_K$ pour toute extension finie $K/k;$
(3) 
la suite $$\varprojlim_n CH_0(X_K)/n\to\prod_{w\in\Omega_K}\varprojlim_nCH_0'(X_{K_w})/n\to Hom(Br(X_K),\mathbb{Q}/\mathbb{Z})$$
est exacte pour toute extension finie $K/k.$ On démontre que (1) implique (2), et que (2) et (3) sont équivalentes.
On trouve également une implication similaire pour le principe de Hasse.

Comme application, on montre l'exactitude de la suite ci-dessus pour les compactifications lisses
de certains espaces homogènes de groupes algébriques linéaires.

\normalsize

\tableofcontents

\section*{Introduction}

\subsection*{Brauer-Manin obstruction, for rational points and for 0-cycles}

Let $X$ be a proper smooth algebraic variety defined over a number field $k.$
We denote by $\Omega_k$ the set of all the places of $k,$ and by $k_v$ the completion of $k$ with respect to $v\in\Omega_k.$
Let $Br(X)=H^2_{\mbox{\scriptsize\'et}}(X,\mathbb{G}_m)$ be the cohomological Brauer group of $X.$
In \cite{Manin}, Manin defined the following pairing (called the \emph{Brauer-Manin pairing}):
\begin{equation*}
    \begin{array}{rcccl}
     \langle\cdot,\cdot \rangle _k:\prod_{v\in\Omega_k}X(k_v) & \times & Br(X) & \to & \mathbb{Q}/\mathbb{Z},\\
         (\mbox{ }\{x_v\}_{v\in\Omega_k} & , & b\mbox{ }) & \mapsto & \sum_{v\in\Omega_k}inv_v(b(x_v)),\\
    \end{array}
\end{equation*}
where $inv_v:Br(k_v)\hookrightarrow\mathbb{Q}/\mathbb{Z}$ is the local invariant at $v,$
and where $b(x_v)$ is the evaluation of $b$ at the point $x_v,$ \emph{i.e.} the pull back of $b\in Br(X)$ via the morphism
$x_v:Spec(k_v)\to X.$ According to the exact sequence $$0\to Br(k)\to\bigoplus_vBr(k_v)\to\mathbb{Q}/\mathbb{Z}\to0$$
coming from class field theory, the pairing factorizes through $Br(X)/Br(k)$ and
the left kernel $\left[\prod_vX(k_v)\right]^{Br}$ of the pairing contains
$X(k),$ and its closure $\overline{X(k)}$ in $\prod_vX(k_v)$ by continuity.
The condition $\left[\prod_vX(k_v)\right]^{Br}=\emptyset$
gives an obstruction to the existence of a global rational point.
This explained most known examples for the failure of Hasse principle
at that time.
If $\left[\prod_vX(k_v)\right]^{Br}\neq\emptyset$ implies that
$X$ has a $k$-rational point, we say that \emph{the Brauer-Manin obstruction is the only obstruction to the Hasse principle
for rational points}. Similarly, the condition $\left[\prod_vX(k_v)\right]^{Br}\subsetneq\prod_vX(k_v)$ gives
an obstruction to the weak approximation property; if $\overline{X(k)}=\left[\prod_vX(k_v)\right]^{Br}$
we say that \emph{the Brauer-Manin obstruction is
the only obstruction to weak approximation for rational points}.

\begin{pb*}
On which families of algebraic varieties, is
the Brauer-Manin obstruction the only obstruction to the Hasse principle/weak approximation for rational points?
\end{pb*}

The question dates back to a paper of Colliot-Thélène and Sansuc
\cite{CTSansuc77-3}, see the Bourbaki expos\'e by Peyre \cite{Peyre} for a recent survey of the problem.

\bigskip
Let us write $X_K=X\otimes_kK$ for any field extension $K$ of $k.$
In \cite{CT95}, Colliot-Th\'el\`ene extended the Brauer-Manin pairing to 0-cycles:
\begin{equation*}
    \begin{array}{rcccl}
     \langle\cdot,\cdot \rangle _k:\prod_{v\in\Omega_k}Z_0(X_{k_v}) & \times & Br(X) & \to & \mathbb{Q}/\mathbb{Z},\\
         (\mbox{ }\{z_v\}_{v\in\Omega_k} & , & b\mbox{ }) & \mapsto & \sum_{v\in\Omega_k}inv_v(\langle z_v,b \rangle _{k_v}),\\
    \end{array}
\end{equation*}
where $\langle\cdot,\cdot\rangle_{k_v}$ is defined as follows,
\begin{equation*}
    \begin{array}{rcccl}
     \langle\cdot,\cdot\rangle_{k_v}:   Z_0(X_{k_v}) & \times & Br(X) & \to & Br(k_v),\\
          (\mbox{ }\sum_Pn_PP &,& b\mbox{ }) & \mapsto & \sum_Pn_Pcores_{k_v(P)/k_v}(b(P)).\\
    \end{array}
\end{equation*}
He conjectured that the Brauer-Manin obstruction is the only obstruction to the Hasse principle
for 0-cycles of degree $1$ on any smooth proper variety. This conjecture is proved for any curve (supposing finiteness
of the Tate-Shafarevich group of its Jacobian) by Saito \cite{Saito}.

As a variant of \cite[page 69]{CT-SD}, we say that \emph{the Brauer-Manin obstruction is
the only obstruction to weak approximation for 0-cycles of degree $1$}, if for any positive integer $n$ and
for any finite subset $S\subset\Omega_k,$ given an arbitrary family $\{z_v\}\bot Br(X)$ of local 0-cycles of degree $1,$
then there exists a global 0-cycle $z_{n,S}$ of degree $1$ such that $z_{n,S}$ and $z_v$ have the same image in $CH_0(X_{k_v})/n$
for any $v\in S,$ \emph{cf.} \S \ref{WA 0-cyc}.
This notion turns out to be closely related to the exactness of the sequences $(E)$ and $(E_0)$ described as follows.

\subsection*{Exact sequences of local-global type}

We define the modified Chow group $CH'_0(X_{k_v})$ to be the usual Chow group $CH_0(X_{k_v})$ of 0-cycles for each
non-archimedean place $v,$ otherwise we set $CH'_0(X_{k_v})=CH_0(X_{k_v})/N_{\bar{k}_v|k_v}CH_0(X_{\bar{k}_v})$ for archimedean
places where $N_{\bar{k}_v|k_v}$ represents the norm.
The above pairing for 0-cycles factorizes through the modified Chow groups
$CH_0'(X_{k_v}).$
Let us denote $M^\chapeau=\varprojlim_nM/nM$ for any abelian group $M.$
The exactness of the complex induced by the Brauer-Manin pairing
$$CH_0(X)^\chapeau\to\prod_{v\in\Omega}CH'_0(X_{k_v})^\chapeau\to Hom(Br(X),\mathbb{Q}/\mathbb{Z}) \leqno(E)$$
is conjectured by Colliot-Th\'el\`ene/Sansuc \cite{CTSansuc81} for rational surfaces,
by Kato/Saito \cite{KatoSaito86} and Colliot-Th\'el\`ene \cite{CT95} for all smooth proper varieties;
it has been reformulated into this form by van Hamel \cite{vanHamel} and Wittenberg \cite{Wittenberg}.

Let $A_0(X)=Ker[deg:CH_0(X)\to\mathbb{Z}].$ We can consider the degree $0$ part of the sequence $(E):$
$$A_0(X)^\chapeau\to\prod_{v\in\Omega}A_0(X_{k_v})^\chapeau\to Hom(Br(X),\mathbb{Q}/\mathbb{Z})\leqno(E_0)$$
If $X$ is a smooth projective curve such that the Tate-Shafarevich group $\sha(Jac(X),k)$ of its Jacobian is finite,
then Colliot-Th\'el\`ene proved
the exactness of $(E)$ for $X$ in \cite[Proposition 3.3]{CT99HP0-cyc}.
In particular, he reproved (part of) the Cassels-Tate exact sequence when $X$ is an elliptic curve.

\subsection*{Rational points versus 0-cycles}

One may ask whether there are some relations between the arithmetic of rational points and
the arithmetic of 0-cycles.
Some recent examples (\emph{cf.} \S \ref{remark}) show that one should not expect an affirmative answer
for general proper smooth varieties.
But sometimes these two questions are discussed in parallel using similar methods, for example
\cite{CT-SD} \cite{CT-Sk-SD}.

\subsection*{Main results} In the present work, mainly for rationally connected varieties,
we find out a relation between the Brauer-Manin obstructions for rational points
and for 0-cycles on rationally connected varieties.
As an application, we prove the exactness of $(E)$ and $(E_0)$ for certain homogeneous spaces of linear
algebraic groups. To be precise, we state our main results as follows.

\bigskip
\emph{Let $X$ be a proper smooth variety defined over a number field $k.$
Consider the following statements:}
\begin{enumerate}
\item[(pt-HP)] \emph{the Brauer-Manin obstruction is the only obstruction to the Hasse principle for $K$-rational points
on $X_K$ for any finite extension $K/k;$}

\item[(pt-WA)] \emph{the Brauer-Manin obstruction is the only obstruction to weak approximation for $K$-rational points
on $X_K$ for any finite extension $K/k;$}

\item[(0cyc-HP)] \emph{the Brauer-Manin obstruction is the only obstruction to the Hasse principle for 0-cycles of degree $1$
on $X_K$ for any finite extension $K/k;$}

\item[(0cyc-WA)] \emph{the Brauer-Manin obstruction is the only obstruction to weak approximation for 0-cycles of degree $1$
on $X_K$ for any finite extension $K/k;$}

\item[(\textsc{Exact})] \emph{the sequence $(E)$ is exact for $X_K,$ and therefore so is $(E_0),$ for any finite extension $K/k.$}
\end{enumerate}

\begin{thmA}
The statement \emph{(\textsc{Exact})} implies \emph{(0cyc-HP)} and \emph{(0cyc-WA)}.
If moreover $X$ is \emph{rationally connected}, then
\emph{(0cyc-WA)} is equivalent to \emph{(\textsc{Exact})}.
\end{thmA}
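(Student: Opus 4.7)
The implications $(\textsc{Exact}) \Rightarrow$ (0cyc-HP) and (0cyc-WA) will proceed via the same basic lifting step. Fix a finite extension $K/k$ and a family $\{z_w\}_{w\in\Omega_K}$ of degree-$1$ local $0$-cycles on $X_K$ orthogonal to $Br(X_K)$. Its class in $\prod_w CH_0'(X_{K_w})^\chapeau$ lies in the kernel of the right-hand arrow of the sequence $(E)$, so by $(\textsc{Exact})$ applied over $K$ it pulls back to some $\bar z \in CH_0(X_K)^\chapeau = \varprojlim_n CH_0(X_K)/n$. The degree extends to a map $CH_0(X_K)^\chapeau \to \widehat{\mathbb{Z}}$ whose image equals $i_K\widehat{\mathbb{Z}}$, with $i_K$ the index of $X_K$; since the image of $\bar z$ at each place has degree $1$, $\deg(\bar z) = 1 \in \widehat{\mathbb{Z}}$, forcing $i_K = 1$, which proves (0cyc-HP). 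For (0cyc-WA), given $(n, S)$, reduce $\bar z$ modulo $n$ and pick a representative $z' \in Z_0(X_K)$; by construction $z'$ has the same image as $z_w$ in $CH_0'(X_{K_w})/n$ at every place $w$, and $\deg(z') \equiv 1 \pmod n$, so correcting $z'$ by $(\deg(z')-1)$ times a fixed degree-$1$ global $0$-cycle (supplied by (0cyc-HP)) yields the required $z_{n,S}$ of degree exactly $1$.

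For the converse direction, assume $X$ is rationally connected and that (0cyc-WA) holds over every finite extension $K/k$. Fix $K$ and $\{z_w\} \in \prod_w CH_0'(X_{K_w})^\chapeau$ orthogonal to $Br(X_K)$. The plan is, for each $n$, to produce a class $y_n \in CH_0(X_K)/n$ whose image equals $z_w \bmod n$ at \emph{every} place $w$, compatibly as $n$ varies, and then to assemble the $y_n$ via a Mittag-Leffler argument into an element of $CH_0(X_K)^\chapeau$ with the required image.

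Rational connectedness enters through the following: by standard results on $CH_0$ of rationally connected varieties over local fields (Koll\'ar-Szab\'o and consequences), at all but finitely many places $w$ the group $CH_0'(X_{K_w})/n$ is captured by the degree, so for fixed $n$ there is a finite set $S_n$ outside of which $z_w \bmod n$ is determined by $\deg(z_w) \bmod n$. One then modifies the family $\{z_w\}$ at $w \in S_n$ by adding multiples of a fixed degree-$1$ global $0$-cycle to normalize it into a family of degree-$1$ local $0$-cycles on $X_K$ orthogonal to $Br(X_K)$; applying (0cyc-WA) to this family with data $(n, S_n)$ produces the desired $y_n$. The \textbf{main obstacle} is precisely this reduction step: translating the abstract orthogonality of an element of a product of inverse limits into a \emph{bona fide} family of degree-$1$ local $0$-cycles that matches $z_w \bmod n$ on $S_n$, and ensuring the resulting $y_n$ are Mittag-Leffler compatible across $n$. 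Finiteness of $Br(X_K)/Br(K)$ (a consequence of rational connectedness) and the good-reduction behavior of $CH_0$ for rationally connected varieties are what make both steps go through.
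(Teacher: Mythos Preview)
Your forward direction $(\textsc{Exact})\Rightarrow$(0cyc-HP), (0cyc-WA) is essentially the paper's argument, with one wrinkle: the sequence $(E)$ only controls images in $CH'_0(X_{K_w})/n$, not in $CH_0(X_{K_w})/n$, and these differ at real places. The paper handles this by passing to the degree-$0$ part $(E_0)$ and using that $A_0(X_v)^\chapeau=A'_0(X_v)$ for archimedean $v$ (Remark~\ref{remark E to ii}); your direct correction by a global degree-$1$ cycle does not close this gap as stated.

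The serious problem is in the converse direction. Your normalization step assumes the existence of ``a fixed degree-$1$ global $0$-cycle'' on $X_K$, but this is not available in general: a rationally connected variety over a number field can have index $>1$ (already a conic with no real point has local index $2$ at that real place, so there is no local---let alone global---degree-$1$ family, and (0cyc-WA) over $K$ is vacuous and gives you nothing). More generally, after the lifting step (the paper's Lemma~\ref{step1}) you land on an honest family of local $0$-cycles of some common degree $\delta$, and the whole difficulty is precisely that $\delta\neq 1$. The paper flags this as the crux (Remark in \S\ref{remark}, item~5.3): passing from degree~$1$ to arbitrary degree~$\delta$ \emph{while staying over the base field} is not known.

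The paper's route is genuinely different from yours. It proves (0cyc-WA)$\Rightarrow$(0cyc-WA)$^\delta$ (Proposition~\ref{prop ii-ii'}) by applying the fibration method to $X\times\mathbb{P}^1\to\mathbb{P}^1$: one moves the degree-$\delta$ family into the fiber over a closed point $\theta\in\mathbb{P}^1$, obtaining a family of \emph{local rational points} on $X_{k(\theta)}$, to which (0cyc-WA) over the extension $k(\theta)$ applies; the linear disjointness needed for the Brauer-group comparison is arranged via a generalized Hilbert irreducibility theorem. This is exactly where the hypothesis ``for all finite extensions $K/k$'' is spent. Only then does the paper run the Koll\'ar--Szab\'o/finite-exponent argument (Proposition~\ref{ii' to E}), which for a fixed $\delta$ needs no Mittag--Leffler: a single $n$ annihilating all $A_0(X_v)$ already forces the approximation to hold in the full completion.
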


\begin{thmB}[Theorem \ref{prop i-ii}]
If $X$ is a variety such that the N\'eron-Severi group $NS(X_{\bar{k}})$ is torsion-free
and $H^1(X_{\bar{k}},\mathcal{O}_{X_{\bar{k}}})=H^2(X_{\bar{k}},\mathcal{O}_{X_{\bar{k}}})=0,$
in particular if $X$ is \emph{rationally connected},
then \emph{(pt-HP)} implies \emph{(0cyc-HP)}, and \emph{(pt-WA)} implies \emph{(0cyc-WA)}.
\end{thmB}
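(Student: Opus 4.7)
The plan is to prove (pt-WA) $\Rightarrow$ (0cyc-WA); the Hasse principle implication follows by the same strategy with lighter bookkeeping. Fix a family $\{z_v\}_{v \in \Omega_k}$ of local 0-cycles of degree $1$ orthogonal to $Br(X)$, a finite set $S \subset \Omega_k$, and an integer $n \geq 1$. The target is a global 0-cycle of degree $1$ whose image in $CH_0(X_{k_v})/n$ equals that of $z_v$ for every $v \in S$.

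First I reduce to a single closed point at each place. After enlarging $S$ to contain the archimedean places and those of bad reduction, I invoke a local moving lemma to replace each $z_v$, up to rational equivalence modulo $n$, by a closed point $P_v$ of $X_{k_v}$ of a common large degree $\delta$ (the auxiliary effective parts needed to reach this common degree are arranged so as to cancel in the final balance). I then construct a global field $K/k$ of degree $\delta$ whose local completions at places of $S$ realise the residue fields $k(P_v)$, by choosing an irreducible monic polynomial over $k$ whose local factorisations at $S$ are prescribed via approximation of polynomials and Krasner's lemma. For each $v \in S$ there is then a place $w$ of $K$ with $K_w \cong k(P_v)$, and $P_v$ corresponds to a $K_w$-rational point $x_w$ of $X_K$.

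The core step is to extend $\{x_w\}_{w \mid v \in S}$ to a complete adelic family on $X_K$ orthogonal to the whole of $Br(X_K)$. Orthogonality with respect to the image of $Br(X) \to Br(X_K)$ is inherited from that of $\{z_v\}$ with $Br(X)$. It is here that the hypotheses on $X$ become essential: torsion-freeness of $NS(X_{\bar{k}})$ together with $H^1(X_{\bar{k}}, \mathcal{O}_{X_{\bar{k}}}) = H^2(X_{\bar{k}}, \mathcal{O}_{X_{\bar{k}}}) = 0$ forces $Br(X_{\bar{k}}) = 0$, so every Brauer class on $X_K$ is algebraic and the cokernel of $Br(X)/Br(k) \to Br(X_K)/Br(K)$ is a finite group controlled by the Galois module $\mathrm{Pic}(X_{\bar{k}})$. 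A formal lemma of Harari type then allows me to adjust the family at auxiliary places outside $S$ so that full orthogonality on $X_K$ is achieved. Applying (pt-WA) for $X_K$ yields $x \in X(K)$ close to $x_w$ at every specified place; viewed as a closed point of $X$, this $x$ has class $[P_v] = [z_v]$ in $CH_0(X_{k_v})/n$ for each $v \in S$. Running the same procedure for degree $\delta - 1$ and subtracting produces a global 0-cycle of degree $1$ with the prescribed local images.

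The main obstacle is the core step above: producing $K$ with the correct local profile while keeping the transcendental part of $Br(X_K)$ trivial, so that Harari's adjustment suffices to reach full orthogonality. Without the structural hypotheses on $X$, the passage from $Br(X)$ to $Br(X_K)$ could create uncontrolled new Brauer classes recording an obstruction invisible on the base; it is precisely the vanishing of $Br(X_{\bar{k}})$ that ensures the construction closes up and the reduction to (pt-WA) over $K$ is legitimate.
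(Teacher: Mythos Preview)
Your overall strategy---pass to a finite extension $K/k$ and invoke (pt-WA) on $X_K$---matches the paper's, but the execution has a real gap at the step ``reduce to a single closed point at each place.'' There is no local moving lemma that replaces an arbitrary $0$-cycle on $X_{k_v}$ by a single closed point of prescribed degree while keeping its class in $CH_0(X_{k_v})/n$ under control; this is precisely the difficulty the paper is built to circumvent. The paper's device is to pass first to $X\times\mathbb{P}^1$ and use the projection $\pi:X\times\mathbb{P}^1\to\mathbb{P}^1$: after the genuine moving lemma (Lemma~\ref{moving lemma}) one arranges that each $\pi_*(z_v^3)$ is a separable effective $0$-cycle on $\mathbb{P}^1_{k_v}$, and one then approximates these \emph{pushforwards on $\mathbb{P}^1$} by a single global closed point $\theta$ via Krasner's lemma on polynomials. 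The fibre $\pi^{-1}(\theta)\simeq X_{k(\theta)}$ then carries honest local rational points $M_w$, and (pt-WA) applies there. Without the auxiliary $\mathbb{P}^1$ you have no mechanism to collapse local $0$-cycles on $X$ itself to local rational points over a common extension, and your appeal to an irreducible polynomial with prescribed local factorisations already presupposes you are working on $\mathbb{A}^1$.

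Two further points. First, the hypotheses do \emph{not} force $Br(X_{\bar k})=0$; they only give finiteness (Proposition~\ref{finiteness of Br}), as witnessed by Artin--Mumford's rationally connected threefolds with $Br(\overline{X})\cong H^3(\overline{X},\mathbb{Z})_{\mathrm{tor}}\neq 0$. The paper's Proposition~\ref{Brauer group} handles the transcendental part by choosing $k'/k$ so that $\Gamma_{k'}$ acts trivially on both $Pic(\overline{X})$ and $Br(\overline{X})$, and then arranging $\Delta=[k(\theta):k]\equiv 1\pmod{[k':k]}$ so that $k(\theta)$ is linearly disjoint from $k'$; this makes $Br(X)/Br(k)\to Br(X_{k(\theta)})/Br(k(\theta))$ an \emph{isomorphism}, and no Harari-type adjustment at auxiliary places is needed or invoked. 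Second, your degree correction (``run the same procedure for degree $\delta-1$ and subtract'') is circular as stated; the paper instead builds the congruence $\Delta\equiv 1\pmod{n\delta_P}$ into the construction from the outset, so that subtracting a suitable multiple of $nP$ (trivial in $CH_0/n$) corrects the degree in one stroke.
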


Since the statements (pt-HP) and (pt-WA) are proved by Borovoi for smooth compactifications of certain
homogeneous spaces of linear algebraic groups \cite{Borovoi96}, we obtain the following

\begin{cor*}
Let $Y$ be a
homogeneous space of a connected linear algebraic group $G$ defined over a number field,
and let $X$ be a smooth compactification of $Y.$ Suppose that the geometric stabilizer
of $Y$ is connected (or abelian if $G$ is supposed semisimple simply connected).

Then the sequences $(E)$ and $(E_0)$ are exact for $X,$ and
the Brauer-Manin obstruction is the only obstruction to the Hasse principle/weak approximation for 0-cycles of degree $1$
on $X.$
\end{cor*}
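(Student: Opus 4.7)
The plan is to combine Borovoi's theorem on rational points with Theorems A and B of this paper, so that the corollary becomes a matter of chasing implications in the right order.

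First, one verifies that a smooth compactification $X$ of $Y$ is rationally connected. Under the stated hypothesis on the geometric stabilizer, the geometric homogeneous space $Y_{\bar{k}}$ is a rational variety (a classical fact for $G_{\bar{k}}/H$ with $H$ connected, and still available in the semisimple simply connected case with abelian geometric stabilizer), hence so is its smooth compactification $X_{\bar{k}}$. In particular $X$ is rationally connected, and the same holds for the base change $X_K$ over every finite extension $K/k$. This ensures that the hypotheses of both Theorem B and the equivalence part of Theorem A are satisfied for $X$.

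Next, Borovoi's theorem \cite{Borovoi96} gives (pt-HP) and (pt-WA) for $X$: indeed, for any finite extension $K/k$, the variety $Y_K$ is still a homogeneous space of the connected linear group $G_K$ whose geometric stabilizer is of the required type, so Borovoi's result applies equally well after base change. Applying Theorem B to $X$ then yields (0cyc-HP) and (0cyc-WA). Finally, since $X$ is rationally connected, the equivalence part of Theorem A converts (0cyc-WA) into (\textsc{Exact}); that is, the sequences $(E)$ and $(E_0)$ are exact for $X_K$ for every finite extension $K/k$, which is the remaining assertion of the corollary.

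The argument therefore reduces to assembling three ingredients: Borovoi's theorem, Theorem B, and the equivalence part of Theorem A. The only points needing direct verification are that rational connectedness and the type of the geometric stabilizer are preserved under arbitrary finite extensions of the base field; both are standard. I do not anticipate any genuine technical obstacle beyond choosing the correct order of the implications.
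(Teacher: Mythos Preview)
Your proposal is correct and follows exactly the approach the paper intends: the corollary is stated immediately after Theorems A and B with the sentence ``Since the statements (pt-HP) and (pt-WA) are proved by Borovoi for smooth compactifications of certain homogeneous spaces of linear algebraic groups \cite{Borovoi96}, we obtain the following,'' so the paper's proof is precisely Borovoi's theorem plus Theorems A and B, which is what you do. One minor quibble: for rational connectedness it suffices to note that $G_{\bar{k}}$ is rational so $Y_{\bar{k}}$ is unirational, hence $X_{\bar{k}}$ is rationally connected; asserting that $Y_{\bar{k}}$ itself is rational is slightly stronger than needed (and not entirely obvious in the abelian-stabilizer case), but the weaker statement is enough.
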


The exactness of $(E)$ was not known before the present article even for
(smooth compactifications of) algebraic tori of arbitrary dimension.

\subsection*{Organization of the article}
After some preliminaries in \S \ref{notation}, we discuss in \S \ref{E vs WA}
the relation between the exactness of the sequence $(E)$
and weak approximation for 0-cycles, and prove the first assertion of Theorem A. In
\S \ref{BM obstruction} assuming rational connectedness we give a detailed proof of the second assertion
of Theorem A as well as Theorem B, which divides into
several steps with more precise statements.
Finally in \S \ref{remark}, we make some remarks on Theorems A, B, and the corollary.

\bigskip
\emph{Acknowledgements.}
The author thanks his thesis advisor D. Harari for fruitful discussions and J.-L. Colliot-Th\'el\`ene
for his suggestions.
Thanks to O. Wittenberg for his contribution to this subject, by which the present work is enlightened.

\section{Notation and preliminaries}\label{notation}

In the present article, the base field will always be a number field $k,$
we denote by $\Omega_k$ (respectively $\Omega_k^\textmd{f},$ $\Omega_k^\infty,$ $\Omega_k^\mathbb{R},$ $\Omega_k^\mathbb{C}$)
the set of all the places (respectively finite places, archimedean places, real places, complex places) of $k.$
For a finite set $S\supset\Omega_k^\infty$ of places of $k,$ we write $O_{k,S}$ for the ring of $S$-integers.
We denote by $k_v,$ by $O_v,$ and by $k(v)$ the completion of $k$ with respect to each $v\in\Omega_k,$ its ring of integers, and its
residual field if $v$ is non-archimedean.
We fix for $k$ and for each $k_v$ an algebraic closure $\bar{k}$ and $\bar{k}_v$ such that the following diagram is commutative.
\SelectTips{eu}{12}$$\xymatrix@C=20pt @R=14pt{
\bar{k}\ar[r]&\bar{k}_v
\\ k\ar[r]\ar[u]&k_v\ar[u]
}$$
Let $K$ be a finite extension of $k,$ we denote by $S\otimes_kK$ the set of the places of $K$ over places in $S\subset\Omega_k.$

We denote by $X$ a proper smooth algebraic variety (separated scheme of finite type, geometrically integral if not
otherwise stated) defined over $k.$
We write $X_K=X\otimes_kK$ for any field extension $K$ of $k,$ and
we write simply $X_v=X_{k_v},$ $\overline{X}=X_{\bar{k}},$ and $\overline{X}_v=X_{\bar{k}_v}.$

Let $K/k$ be a Galois extension, we denote its Galois group by $\Gamma_{K|k}$ and we denote
the Galois cohomology $H^i(\Gamma_{K|k},M)$ by $H^i(K|k,M)$ for any $\Gamma_{K|k}$-module $M;$
if moreover $K=\bar{k},$ we write simply $\Gamma_k=\Gamma_{\bar{k}|k}$ and $H^i(k,M)=H^i(K|k,M).$

\subsection{0-cycles}\label{subsec-0-cyc}
Let $z=\sum n_iP_i$ be a 0-cycle of $X$ (expressed as multiplicated sum of distinct closed points $P_i$ of $X$),
we say that it is \emph{separable} if $n_i\in\{0,1,-1\}.$

Given a closed point $P$ of $X_v,$ we fix a $k_v$-embedding $k_v(P)\to\bar{k}_v,$ the point $P$ is viewed
as a $k_v(P)$-rational point of $X_v.$ We say that a closed point $Q$ of $X_v$ is sufficiently close
to $P$ (with respect to a chosen neighborhood $U_P$ of $P$ in the topological space $X_v(k_v(P))$) if
$Q$ has residue field $k_v(Q)=k_v(P)$ and if we can choose a $k_v$-embedding $k_v(Q)\to\bar{k}_v$
such that $Q,$ viewed as a $k_v(Q)$-rational point of $X_v,$ is contained in $U_P.$
Extending $\mathbb{Z}$-linearly, for two 0-cycles $z_v$ and $z'_v$ on $X_v$ of the same degree,
we can say whether $z'_v$ is sufficiently close to $z_v.$
Let $b\in Br(X_v).$ Recall that if $z'_v$ is sufficiently close to $z_v,$ then
$\langle z'_v,b\rangle_{k_v}=\langle z_v,b\rangle_{k_v}$
according to the continuity of the evaluation of elements in the Brauer group, see \cite[Lemma 6.2]{B-Demarche}.
If $B$ is a finite subset of $Br(X_v),$ we say that $z'_v$ is close to $z_v$
with respect to $B$ if $\langle z'_v,b\rangle_{k_v}=\langle z_v,b\rangle_{k_v}$ for every $b\in B.$
Let $n$ be a positive integer. If $z'_v$ is sufficiently close to $z_v,$ then
these 0-cycles have the same image in $CH_0(X_v)/n,$ see \cite[Lemma 1.8]{Wittenberg}.
We say that $z'_v$ is close to $z_v$ with respect to $n,$ if the images of $z_v$ and $z'_v$
in $CH_0(X_v)/n$ coincide.

\subsection{Weak approximation for 0-cycles}\label{WA 0-cyc}

We define the notion of weak approximation for 0-cycles which is a variant of \cite[page 69]{CT-SD}.
Let $\delta$ be an integer. We say that the variety $X$ satisfies \emph{weak approximation for 0-cycles
of degree $\delta$}, if for any positive integer $n$ and for any finite subset $S\subset\Omega_k,$
given an arbitrary family of local 0-cycles $\{z_v\}_{v\in\Omega_k}\in\prod_{v\in\Omega_k}Z_0(X_v)$ each of degree $\delta,$
there exists a global 0-cycle $z=z_{n,S}$ of degree $\delta$ such that $z$ and $z_v$ have the same
image in $CH_0(X_v)/n$ for any $v\in S.$

If we only know that the last assertion is satisfied for those families $\{z_v\}$ such that $\{z_v\}\bot Br(X),$
then we say that \emph{the Brauer-Manin obstruction is the only obstruction to weak approximation for 0-cycles of
degree $\delta$}. If this holds for $X_K$ for every finite extension $K/k,$ we denote it by
$\mbox{(0cyc-WA)}^\delta,$ in particular, $\mbox{(0cyc-WA)}^1$ is exactly the statement $\mbox{(0cyc-WA)}$ in our main results.
By definition, the exactness of $(E_0)$ for $X$ implies the assertion that
the Brauer-Manin obstruction is the only obstruction to weak approximation for 0-cycles of
degree $0$ on $X.$

\subsection{Moving lemma for 0-cycles}

The following lemma is well-known, it will be used in the proof of the main results.

\begin{lem}\label{moving lemma}
Let $\pi:X\to\mathbb{P}^1$ be a dominant morphism between algebraic varieties defined over
$\mathbb{R},$ $\mathbb{C}$ or any finite extension of $\mathbb{Q}_p.$
Suppose that $X$ is smooth.

Then for any 0-cycle $z$ on $X,$ there exists a 0-cycle $z'$ on $X$
such that $\pi_*(z')$ is separable and
such that $z'$ is sufficiently close to $z.$
\end{lem}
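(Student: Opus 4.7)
The plan is to reduce to the case $z = nP$ for a single closed point $P$ with nonzero coefficient $n$, then split $nP$ into a sum of $|n|$ distinct closed points with coefficients $\pm 1$ whose images under $\pi$ are distinct closed points of $\mathbb{P}^1_{k_v}$. Writing $z = \sum_i n_i P_i$ with $K_i := k_v(P_i)$, I choose mutually disjoint analytic neighborhoods of the $P_i$'s so that the one-point constructions described below do not interfere. Fix a component $P = P_i$, $n = n_i$, $K = K_i$, and set $\epsilon = \mathrm{sign}(n)$. I will produce distinct closed points $Q_1, \ldots, Q_{|n|}$ of $X_v$ close to $P$, each of residue field $K$, such that the $\pi(Q_j)$ are pairwise distinct closed points of $\mathbb{P}^1_{k_v}$, each of residue field $K$. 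Then $z' = \epsilon\sum_j Q_j$ is sufficiently close to $nP$ in the sense of \S\ref{subsec-0-cyc}, and because $[k_v(Q_j):k_v(\pi(Q_j))] = 1$ for every $j$, the pushforward $\pi_*(z') = \epsilon\sum_j \pi(Q_j)$ is separable.

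The construction starts by arranging that $\pi$ be smooth at $P$. Since $X$ is smooth and $\pi$ is dominant onto $\mathbb{P}^1$, the smooth locus $U \subset X$ of $\pi$ is Zariski-open and dense, so $U(K)$ is open and dense in the $K$-analytic manifold $X_v(K)$. A first small perturbation inside $X_v(K)$ allows me to replace $P$ by a nearby point of $U(K)$ with residue field $K$, so I may assume $\pi$ smooth at $P$. The implicit function theorem over the local field $K$ then provides a $K$-analytic submersion from a neighborhood $V \subset X_v(K)$ of $P$ onto a neighborhood $W \subset \mathbb{P}^1(K)$ of $\pi(P)$. To build the $Q_j$ I choose $|n|$ distinct closed points $t_1, \ldots, t_{|n|}$ of $\mathbb{P}^1_{k_v}$, each of residue field $K$ and each possessing a $\mathrm{Gal}(K/k_v)$-representative inside $W$, and lift them through the submersion to closed points $Q_j$ of $X_v$ close to $P$.

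The main obstacle is the control of residue fields of the $t_j$ and the $Q_j$. The locus of $K$-points in $W$ whose underlying closed point of $\mathbb{P}^1_{k_v}$ has residue field a proper subfield $F \subsetneq K$ is exactly $W \cap \mathbb{P}^1(F)$, a $k_v$-analytic submanifold of positive codimension; as there are only finitely many intermediate fields $F$, the union of these bad loci is nowhere dense in $W$, so $W$ contains uncountably many closed points of residue field $K$, from which I pick $|n|$. A parallel nowhere-density argument inside $V$ ensures that each lifted $Q_j$ also has residue field $K$. Carrying out this one-point procedure inside the disjoint neighborhoods of each $P_i$, and shrinking if necessary so that all the produced $\pi(Q_{i,j})$ remain distinct across different $i$, yields the desired 0-cycle $z'$.
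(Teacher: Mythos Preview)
Your argument is correct and follows exactly the route the paper indicates: the paper's own proof consists of the single sentence ``Essentially, this result follows from the implicit function theorem'' together with references to \cite{CT-Sk-SD} and \cite{CT-SD}, and your proposal is a careful expansion of that sketch. Two minor remarks: the separate nowhere-density argument for the residue field of $Q_j$ is unnecessary, since $k_v(\pi(Q_j))=K$ already forces $k_v(Q_j)=K$ (you have $k_v(\pi(Q_j))\subseteq k_v(Q_j)\subseteq K$); and the final clause ``shrinking if necessary'' is not quite the right mechanism when several $P_i$ have the same image under $\pi$ --- what actually works is simply choosing the finitely many target points $t_{i,j}$ to be pairwise distinct closed points, which is possible because each neighbourhood $W_i$ contains infinitely many closed points of residue field $K_i$.
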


\begin{proof}
Essentially, this result follows from the implicit function theorem. We find detailed arguments
in \cite[p.19]{CT-Sk-SD} and \cite[p.89]{CT-SD}.
\end{proof}

\subsection{Rationally connected varieties}

The following definition is one of various equivalent definitions due to Koll\'ar, Miyaoka and
Mori \cite[IV.3.]{Kollar}.

\begin{defn}
A proper smooth variety $X$ defined over an arbitrary field of characteristic zero $k$ is said to be
\emph{rationally connected},  if for some uncountable algebraically closed field $L$ containing $k,$
for any pair of points $P,Q\in X(L)$ there exists an $L$-morphism
$f:\mathbb{P}^1_L\to X_L$ such that $f(0)=P$ and $f(\infty)=Q.$
\end{defn}

The following property is also well-known for the family of rationally connected varieties.

\begin{prop}\label{finiteness of Br}
Let $X$ be a proper smooth variety defined over an algebraically closed field
of characteristic zero such that the N\'eron-Severi group $NS(X)$ is torsion-free and
$H^1(X,\mathcal{O}_X)=H^2(X,\mathcal{O}_X)=0,$ for example if $X$ is rationally connected.

Then the Picard group $Pic(X)$ is torsion-free and finitely generated, and the Brauer group
$Br(X)$ is finite.
\end{prop}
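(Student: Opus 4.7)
The plan is to split the two assertions. I first handle the Picard group; for the Brauer group I reduce via the Lefschetz principle to the case $k=\mathbb{C}$, where all invariants in question depend only on $X^{\mathrm{an}}$, and I exploit the exponential exact sequence.

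For $Pic(X)$: in characteristic zero the Picard scheme $\mathbf{Pic}_X^0$ is smooth, so its dimension equals that of its tangent space at the origin, which is $h^1(X,\mathcal{O}_X)=0$. Hence $\mathbf{Pic}_X^0=0$, and the natural map $Pic(X)\to NS(X)$ is an isomorphism. The group $NS(X)$ is finitely generated by the theorem of the base and torsion-free by hypothesis, so the same is true of $Pic(X)$.

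For $Br(X)$: after reducing to $k=\mathbb{C}$, apply the exponential exact sequence of analytic sheaves $0\to\mathbb{Z}\to\mathcal{O}_X\to\mathcal{O}_X^*\to 0$ to obtain
$$H^2(X,\mathcal{O}_X)\To H^2(X,\mathcal{O}_X^*)\To H^3(X,\mathbb{Z}).$$
The hypothesis $H^2(X,\mathcal{O}_X)=0$ produces an injection of the analytic Brauer group $H^2(X,\mathcal{O}_X^*)$ into the finitely generated abelian group $H^3(X,\mathbb{Z})$. By Grothendieck's comparison theorem for smooth proper complex varieties, the algebraic Brauer group $Br(X)=H^2_{\text{\'et}}(X,\mathbb{G}_m)$ is torsion and coincides with the torsion subgroup of the analytic one; it therefore injects into $H^3(X,\mathbb{Z})_{\tor}$, which is finite.

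The main obstacle is the last identification: one must carefully invoke Grothendieck's comparison between the algebraic and analytic Brauer groups for smooth proper complex varieties. A strictly \'etale alternative is to use the Kummer sequence $0\to Pic(X)\otimes\mathbb{Z}_\ell\to H^2_{\text{\'et}}(X,\mathbb{Z}_\ell(1))\to T_\ell Br(X)\to 0$ together with the rank equality $\rho=b_2$ (from the surjection $Pic(X)\twoheadrightarrow H^2(X,\mathbb{Z})$, a consequence of $H^2(X,\mathcal{O}_X)=0$) to force $T_\ell Br(X)=0$, whence each primary part $Br(X)\{\ell\}$ is finite, and then to invoke the standard structural decomposition of $Br(X)$ into a divisible part of corank $b_2-\rho$ and a finite piece.
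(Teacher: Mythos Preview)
Your argument is correct. The paper does not actually give a proof of this proposition: it simply cites Grothendieck's \emph{Dix Expos\'es}, Colliot-Th\'el\`ene--Raskind, and Debarre. Your approach via the exponential sequence over $\mathbb{C}$ (and the \'etale alternative via Kummer and $\rho=b_2$) is precisely the kind of argument found in those references, so there is nothing substantive to compare.

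Two small remarks. First, the Lefschetz reduction deserves one more sentence: invariance of $Br(X)[n]$ under extension of algebraically closed fields follows from the Kummer sequence, smooth proper base change for $H^2_{\text{\'et}}(X,\mu_n)$, and the fact that here $Pic(X)=NS(X)$ is discrete (since you have already shown $Pic^0=0$). Second, the fact that $Br(X)$ is torsion for smooth $X$ is independent of the analytic comparison theorem (it comes from the injection $Br(X)\hookrightarrow Br(k(X))$ for regular schemes); you conflate the two slightly in your phrasing.
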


\begin{proof}
Different proofs can be found in the literature, we refer to \cite[II Corollaire 3.4]{Br}
and the proof of \cite[Proposition 2.11]{CTRaskind}, see also \cite[Corollary 4.18]{Debarre} and \cite[Lemma 1.4]{Liang2}.
\end{proof}

\section{Exactness of $(E)$ and weak approximation for 0-cycles}\label{E vs WA}

In this section, we assume only that the variety $X$ is proper and smooth.

\subsection{Real places in the sequence $(E)$}
In the sequence $(E)$ the simplest terms are the modified Chow groups for the non-archimedean places.
The group $CH'_0(X_v)$ is $0$ if $v$ is a complex place.
If $v$ is a real place of $k,$ the group $CH'_0(X_v)$
is calculated by Colliot-Th\'el\`ene/Ischebeck \cite[Proposition 3.2]{CTIschebeck}
for any proper smooth $\mathbb{R}$-varieties processing a 0-cycle of degree $1.$
We denote by $s$ the number of connected components (real topology)
of $X_v(\mathbb{R})$ (supposed non-empty), then $CH'_0(X_v)\simeq(\mathbb{Z}/2\mathbb{Z})^s.$

\subsection{$\mbox{(\textsc{Exact})}\Rightarrow\mbox{(0cyc-WA)}^\delta$}

Let $\delta$ be an integer, in this subsection we prove the implication
$\mbox{(\textsc{Exact})}\Rightarrow\mbox{(0cyc-WA)}^\delta$ assuming the existence of a global 0-cycle
of degree $1$ on $X.$

The following proposition is a more precise statement, with the remark immediately after the proof we see that
this completes the implication $\mbox{(\textsc{Exact})}\Rightarrow\mbox{(0cyc-HP)}$ and $\mbox{(0cyc-WA)}.$
It is a statement without base field extensions.
We will see later that the converse statement $\mbox{(0cyc-WA)}^\delta\Rightarrow\mbox{(\textsc{Exact})}$ is true for
rationally connected varieties, \S \ref{E for RC}.

\begin{prop}\label{E to ii}
Let $X$ be a proper smooth variety defined over a number field and let $\delta$ be an integer.
Assume the existence of a global 0-cycle of degree $1$ on $X.$
Then the exactness of $(E_0)$ for $X$ implies the assertion that the Brauer-Manin obstruction is the
only obstruction to weak approximation for 0-cycles of degree $\delta$ on $X.$
\end{prop}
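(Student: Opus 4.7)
The plan is to reduce the degree-$\delta$ approximation problem to a degree-$0$ one by subtracting $\delta$ copies of the assumed global $0$-cycle $z_0$ of degree $1$, and then to read off the conclusion directly from the exactness of $(E_0)$.

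Given a family $\{z_v\}_{v\in\Omega_k}\in\prod_v Z_0(X_v)$ of local $0$-cycles of degree $\delta$ satisfying $\{z_v\}\perp Br(X)$, I would set $w_v:=z_v-\delta z_0 \in Z_0(X_v)$, which has degree $0$. The new family $(w_v)_v\in\prod_v A_0(X_v)$ remains orthogonal to $Br(X)$: orthogonality of $\{z_v\}$ is the hypothesis, while orthogonality of the diagonal family attached to the single global $0$-cycle $\delta z_0$ is exactly the reciprocity law from class field theory recalled in the introduction.

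Through the canonical map $\prod_v A_0(X_v)\to\prod_v A_0(X_v)^\chapeau$, the family $(w_v)_v$ then maps to an element lying in the kernel of the right-hand arrow of $(E_0)$. By the assumed exactness, that image lifts to some $\xi\in A_0(X)^\chapeau=\varprojlim_n A_0(X)/n$. Given now a prescribed $n\geq 1$ and a finite $S\subset\Omega_k$, I would project $\xi$ onto $A_0(X)/n$ and choose any lift $z\in A_0(X)\subset Z_0(X)$. By compatibility, the class of $z$ in $A_0(X_v)/n$ coincides with that of $w_v$ for every $v$, hence also in $CH_0(X_v)/n$ via the inclusion $A_0(X_v)\hookrightarrow CH_0(X_v)$. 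Setting $z':=z+\delta z_0$ produces a global $0$-cycle of degree $\delta$ whose image in $CH_0(X_v)/n$ equals that of $z_v=w_v+\delta z_0$ for every $v\in S$, which is exactly the statement to be proved.

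The only nontrivial point is the bookkeeping around the profinite completion: one must make sure that an element $\xi\in A_0(X)^\chapeau$ whose image in $\prod_v A_0(X_v)^\chapeau$ coincides with $(w_v)_v$ indeed produces, at each finite level $n$, an element of $A_0(X)/n$ whose pullback matches $w_v$ modulo $n$ at every place. This is a formal consequence of the definition of $\varprojlim$, but it is the heart of why exactness of $(E_0)$ is strong enough to yield weak approximation modulo $n$. In contrast to the converse $\mbox{(0cyc-WA)}^\delta\Rightarrow\mbox{(\textsc{Exact})}$ announced in \S \ref{E for RC}, this direction requires neither the rational connectedness hypothesis nor an invocation of the moving lemma of \S \ref{notation}.
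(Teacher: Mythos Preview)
Your proof is correct and follows essentially the same route as the paper's: subtract $\delta z_0$ to reduce to degree $0$, invoke exactness of $(E_0)$, then add $\delta z_0$ back. The paper is slightly terser---it writes $\delta z_0 - z_v$ rather than $z_v - \delta z_0$ and appeals directly to the remark in \S\ref{WA 0-cyc} that exactness of $(E_0)$ already yields weak approximation for $0$-cycles of degree $0$---whereas you spell out the passage through $A_0(X)^\chapeau$ explicitly, but the content is the same.
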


\begin{proof}
Let $n$ be a positive integer and let $S$ be a finite set of places of $k.$ Suppose we are
given $\{z_v\}_{v\in\Omega_k}$ an arbitrary family of local 0-cycles of degree $\delta$ such that
$\{z_v\}\bot Br(X).$ We fix $z_0$ a global 0-cycle of degree $1,$ and get a family of local
0-cycles of degree zero $\{\delta z_0-z_v\}\bot Br(X).$ The exactness of $(E_0)$ gives us
a global 0-cycle $z'=z'_{n,S}$ of degree $0$ such that it has the same image as $\delta z_0-z_v$ in $A_0(X_v)/n$
for any $v\in S.$ We set $z=\delta z_0-z',$ it is a global 0-cycle of degree $\delta$ having
the same image as $z_v$ in $CH_0(X_v)/n$ for any $v\in S.$
\end{proof}

\begin{rem}\label{remark E to ii}
It is pointed out by Wittenberg that the exactness of $(E)$ implies
the assertion that the Brauer-Manin obstruction is the only obstruction
to the Hasse principle for 0-cycles of degree $1,$ \cite[Remark 1.1(iii)]{Wittenberg},
hence the implication (\textsc{Exact})$\Rightarrow$(0cyc-HP) is clear.
He also remarked that the exactness of $(E)$ implies automatically the exactness
of $(E_0)$ since $A_0(X_v)^\chapeau=A'_0(X_v):=A_0(X_v)/N_{\bar{k}_v|k_v}A_0(\overline{X}_v)$
if $v$ is an archimedean place, \cite[Remark 1.1(ii)]{Wittenberg}.
If we consider only the case $\delta=1,$
and want to prove the assertion that the Brauer-Manin obstruction is the only obstruction
to weak approximation for 0-cycles of degree $1,$ we shall suppose that there exists a family of
degree $1$ local 0-cycles $\{z_v\}\bot Br(X),$ therefore there exists a global 0-cycle of degree $1$
once we admit the exactness of $(E),$ we get unconditionally the implication
$\mbox{(\textsc{Exact})}\Rightarrow\mbox{(0cyc-WA)}.$

Even without existence of a global 0-cycle of degree $1,$ we can also argue directly with
$CH_0(-)$ and check that the exactness
of $(E)$ implies the assertion that the Brauer-Manin obstruction is the only obstruction
to weak approximation \emph{at finite places} for 0-cycles of degree $\delta$ on $X.$
In this case, we cannot reduce the question to $A_0(-)$ and the Brauer-Manin pairing only gives
information on $CH'_0(X_v)^\chapeau$ if $v$ is an archimedean place, where the latter group in general is
not isomorphic to $CH_0(X_v)^\chapeau,$ hence we cannot get approximation properties at the archimedean places.
\end{rem}

\section{Brauer-Manin obstruction, from rational points to 0-cycles}\label{BM obstruction}
We assume that $X$ is a variety such that the N\'eron-Severi group $NS(\overline{X})$ is torsion-free and
$H^1(\overline{X},\mathcal{O}_{\overline{X}})=H^2(\overline{X},\mathcal{O}_{\overline{X}})=0$
from \S \ref{comparison} to \S \ref{ii to ii'}. And we suppose further that $X$ is rationally
connected in \S \ref{E for RC}. Under these assumptions,
we prove Theorem B as well as the second assertion in Theorem A, \emph{i.e.}
for such a variety $X$
$\mbox{(pt-HP)}\Rightarrow\mbox{(0cyc-HP)}$ and
$\mbox{(pt-WA)}\Rightarrow\mbox{(0cyc-WA)}\Rightarrow\mbox{(\textsc{Exact})}.$
The proof divides into several steps; first of all, we prove
$\mbox{(pt-HP)}\Rightarrow\mbox{(0cyc-HP)}$ and $\mbox{(pt-WA)}\Rightarrow\mbox{(0cyc-WA)}$
in \S \ref{i to ii}; secondly, we prove $\mbox{(0cyc-WA)}\Rightarrow\mbox{(0cyc-WA)}^\delta$ in \S \ref{ii to ii'};
finally we complete the proof by proving $\mbox{(0cyc-WA)}^\delta\Rightarrow(\mbox{\textsc{Exact}})$
in \S \ref{E for RC}.

\subsection{Comparison of Brauer groups}\label{comparison}

We compare Brauer groups in this subsection, the following proposition
will be used in the proof in \S \ref{i to ii} and \S \ref{ii to ii'}.
It is a particular case of \cite[Theorem 2.3.1]{Harari2} by
Harari, we include a complete proof here for the convenience of the reader.

\begin{prop}\label{Brauer group}
Let $X$ be a proper smooth variety defined over a number field $k$ such that
the N\'eron-Severi group $NS(\overline{X})$ is torsion-free and
$H^1(\overline{X},\mathcal{O}_{\overline{X}})=H^2(\overline{X},\mathcal{O}_{\overline{X}})=0.$
Then there exists a finite extension $k'$ of $k$ satisfying the following condition:
for all finite extensions $l$ of $k$ linearly disjoint from $k'$ over $k,$
the homomorphism induced by restriction
$$Br(X)/Br(k)\to Br(X_l)/Br(l)$$
is an isomorphism of finite groups.
\end{prop}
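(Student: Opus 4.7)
The plan is to follow the argument of Harari (this proposition is \cite[Theorem 2.3.1]{Harari2}), based on the Hochschild-Serre spectral sequence
$$E_2^{p,q}=H^p(\Gamma_k,H^q(\overline{X},\mathbb{G}_m))\Rightarrow H^{p+q}_{\mbox{\scriptsize\'et}}(X,\mathbb{G}_m).$$
Using $H^0(\overline{X},\mathbb{G}_m)=\bar{k}^*$, $H^1(\overline{X},\mathbb{G}_m)=Pic(\overline{X})$, $H^2(\overline{X},\mathbb{G}_m)=Br(\overline{X})$ and Hilbert 90, I would extract the usual low-degree exact sequence
$$0\to Pic(X)\to Pic(\overline{X})^{\Gamma_k}\to Br(k)\to Br_1(X)\to H^1(\Gamma_k,Pic(\overline{X}))\to H^3(\Gamma_k,\bar{k}^*),$$
where $Br_1(X)=\ker(Br(X)\to Br(\overline{X}))$, together with an injection $Br(X)/Br_1(X)\hookrightarrow Br(\overline{X})^{\Gamma_k}$ whose image is the kernel of the differential $d_2$. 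By Proposition \ref{finiteness of Br}, $Pic(\overline{X})$ is finitely generated torsion-free and $Br(\overline{X})$ is finite, so $H^1(\Gamma_k,Pic(\overline{X}))$ and $Br(\overline{X})^{\Gamma_k}$ are finite and hence so is $Br(X)/Br(k)$.

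Since $Pic(\overline{X})$ and $Br(\overline{X})$ are continuous discrete $\Gamma_k$-modules (respectively of finite type and finite), there is a finite Galois extension $k'/k$ such that $\Gamma_{k'}$ acts trivially on both; I would take this (possibly after enlargement, see below) as the $k'$ in the statement. For any finite extension $l/k$ linearly disjoint from $k'/k$, setting $l'=lk'$ gives a canonical isomorphism $\Gamma_{l'/l}\xrightarrow{\sim}\Gamma_{k'/k}$, and $\Gamma_{l'}$ still acts trivially on $Pic(\overline{X})$ and $Br(\overline{X})$. Because $Pic(\overline{X})$ is torsion-free and $\Gamma_{k'},\Gamma_{l'}$ are profinite, one has $H^1(\Gamma_{k'},Pic(\overline{X}))=Hom_{cont}(\Gamma_{k'},Pic(\overline{X}))=0$ and likewise over $l'$; inflation-restriction therefore collapses to yield canonical isomorphisms
$$H^1(\Gamma_k,Pic(\overline{X}))\cong H^1(\Gamma_{k'/k},Pic(\overline{X}))\cong H^1(\Gamma_{l'/l},Pic(\overline{X}))\cong H^1(\Gamma_l,Pic(\overline{X})),$$
and evidently $Pic(\overline{X})^{\Gamma_k}=Pic(\overline{X})^{\Gamma_l}$ and $Br(\overline{X})^{\Gamma_k}=Br(\overline{X})^{\Gamma_l}$.

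The conclusion will come from comparing the Hochschild-Serre spectral sequences for $X/k$ and $X_l/l$ via the restriction maps, which produce a commutative ladder between the two low-degree sequences. The crucial vertical arrows, namely those on $Pic(\overline{X})^\Gamma$, on $H^1(-,Pic(\overline{X}))$ and on $Br(\overline{X})^\Gamma$, are isomorphisms by the previous step; a five-lemma style diagram chase then gives the sought-after isomorphism $Br(X)/Br(k)\xrightarrow{\sim}Br(X_l)/Br(l)$.

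The main obstacle is that the outer terms $H^3(\Gamma_k,\bar{k}^*)$ and $H^2(\Gamma_k,Pic(\overline{X}))$ in the spectral sequence are not a priori preserved under restriction to $l$. The way around this is to exploit the finiteness of the groups that actually intervene: since $Br_1(X)/Br(k)$ is a finite subgroup of $H^1(\Gamma_k,Pic(\overline{X}))$ and the image of $Br(X)/Br_1(X)\hookrightarrow Br(\overline{X})^{\Gamma_k}$ is a finite kernel of a differential, one enlarges $k'$ to absorb also the (finitely many) cocycle representatives of the relevant classes and of the differentials, so that after restriction to any $l$ linearly disjoint from this enlarged $k'$ the kernel and cokernel descriptions transport verbatim. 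This enlargement is the essential content of the argument, and it is where the hypothesis that $Br(X)/Br(k)$ is finite, supplied by the torsion-freeness of $NS(\overline{X})$ and the vanishing of $H^i(\overline{X},\mathcal{O}_{\overline{X}})$, plays its central role.
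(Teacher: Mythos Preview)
Your approach is essentially the paper's, and it is correct in outline, but two points deserve sharpening.

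First, you treat $H^3(\Gamma_k,\bar{k}^*)$ as a genuine obstacle. For a number field $k$ one has $H^3(k,\bar{k}^*)=0$ (class field theory), so the low-degree sequence collapses to the cleaner four-term sequence
$$0\to H^1(k,Pic(\overline{X}))\to Br(X)/Br(k)\to Br(\overline{X})^{\Gamma_k}\to H^2(k,Pic(\overline{X})),$$
which is what the paper uses. This removes half of your ``main obstacle'' for free and makes the diagram chase a straightforward application of the five lemma.

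Second, your enlargement step (``absorb the finitely many cocycle representatives \dots so that the kernel and cokernel descriptions transport verbatim'') is too vague to be a proof. The paper's precise formulation is this: after choosing $k'$ Galois over $k$ trivialising the $\Gamma_k$-action on $Pic(\overline{X})$ and $Br(\overline{X})$, the inflation $H^2(k'|k,Pic(\overline{X}))\hookrightarrow H^2(k,Pic(\overline{X}))$ is injective (because $H^1(k',Pic(\overline{X}))=0$, as you observed). Since $Br(\overline{X})$ is finite, the image of the differential $Br(\overline{X})^{\Gamma_k}\to H^2(k,Pic(\overline{X}))$ is finite; each class lifts to some finite-level $H^2$, so one may enlarge $k'$ until this image lies in the inflated subgroup $H^2(k'|k,Pic(\overline{X}))$. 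The four-term sequence then rewrites entirely in terms of the finite group $\Gamma_{k'|k}$, and for $l$ linearly disjoint from $k'$ the isomorphism $\Gamma_{l'|l}\simeq\Gamma_{k'|k}$ identifies the two rows of the ladder term by term; the five lemma finishes. Without this precise reduction to finite-group cohomology on \emph{both} ends, your ``transport verbatim'' claim is not justified, because the restriction $H^2(k,Pic(\overline{X}))\to H^2(l,Pic(\overline{X}))$ is not an isomorphism in general.
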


\begin{proof}

The Hochschild-Serre spectral sequence
$$E_2^{p,q}=H^p(k,H^q(\overline{X},\mathbb{G}_m))\Rightarrow H^{p+q}(X,\mathbb{G}_m)$$
gives us an exact sequence
$$0\to H^1(k,Pic(\overline{X}))\to Br(X)/Br(k)\to Br(\overline{X})^{\Gamma_{k}}\to H^2(k,Pic(\overline{X})),$$
\emph{cf.} \cite[Proposition 2.2.1]{Harari2} for details.

Under the assumptions on $X,$ the Brauer group $Br(\overline{X})$ is finite,
and the Picard group $Pic(\overline{X})$ is torsion-free and finitely generated.
Let $k'$ be a sufficiently large finite Galois extension of $k$ such that $X(k')\neq\emptyset$ and
such that $\Gamma_{k'}$ acts trivially on $Pic(\overline{X})$ and on  $Br(\overline{X}).$
By construction, the Picard group $Pic(\overline{X})$ identifies with $Pic(X_{k'}).$
As $H^1(k',Pic(\overline{X}))=0,$ the group
$H^1(k,Pic(\overline{X}))$ identifies with $H^1(k'|k,Pic(X_{k'}))$
and we have an injection $H^2(k'|k,Pic(X_{k'}))\hookrightarrow H^2(k,Pic(\overline{X})),$
\emph{cf.} \cite[Propositions 4, 5 Ch.VII]{SerreCorpsLoc}.
As $Br(\overline{X})$ is finite, by enlarging $k'$ if necessary,
we may also assume that the image of $Br(\overline{X})^{\Gamma_{k}}$
in $H^2(k,Pic(\overline{X}))$ is contained in $H^2(k'|k,Pic(X_{k'})).$ The previous
exact sequence becomes
$$0\to H^1(k'|k,Pic(X_{k'}))\to Br(X)/Br(k)\to Br(\overline{X})^{\Gamma_{k'|k}}\to H^2(k'|k,Pic(X_{k'})).$$

Let $l\subset\bar{k}$ be an arbitrary finite extension of $k$ linearly disjoint from $k',$ we denote by $l'$ the composite of
$l$ and $k'.$ We obtain a commutative diagram by functoriality for $l/k:$
\SelectTips{eu}{12}$$\xymatrix@C=12pt @R=14pt{
0\ar[r] & H^1(k'|k,Pic(X_{k'}))\ar[r]\ar[d]& Br(X)/Br(k)\ar[r]\ar[d]& Br(\overline{X})^{\Gamma_{k'|k}}\ar[r]\ar[d]& H^2(k'|k,Pic(X_{k'}))\ar[d]\\
0\ar[r] & H^1(l'|l,Pic(X_{l'}))\ar[r]& Br(X_l)/Br(l)\ar[r]& Br(\overline{X}_l)^{\Gamma_{l'|l}}\ar[r]& H^2(l'|l,Pic(X_{l'})).
}$$
The first and the last two vertical homomorphisms are isomorphisms since
$l'/l$ is a Galois extension of group $\Gamma_{l'|l}\buildrel\simeq\over\to\Gamma_{k'|k}$
and $Pic(X_{k'})=Pic(X_{l'})=Pic(\overline{X}).$
Hence we obtain the desired isomorphism $$Br(X)/Br(k)\buildrel\simeq\over\to Br(X_l)/Br(l)$$
for all finite extensions $l/k$ linearly disjoint from $k'/k.$ The finiteness follows from the exact sequence.
\end{proof}

\subsection{Proof of \textmd{$\mbox{(pt-HP)}\Rightarrow\mbox{(0cyc-HP)}$} and \textmd{$\mbox{(pt-WA)}\Rightarrow\mbox{(0cyc-WA)}$}}
\label{i to ii}

Passing from the rational points to 0-cycles of degree $1,$ we prove the following theorem.
In order to deal with 0-cycles, we have to consider closed points, therefore we really need information concerning
$K$-rational points for all finite extensions $K$ of $k$ instead of information concerning only $k$-rational points.
That is why we state the main results in such a way instead of staying on the base field.

\begin{thm}\label{prop i-ii}
Let $X$ be a proper smooth variety defined over a number field $k$ such that
$NS(\overline{X})$ is torsion-free and
$H^1(\overline{X},\mathcal{O}_{\overline{X}})=H^2(\overline{X},\mathcal{O}_{\overline{X}})=0.$
For any finite extension $K/k,$ we assume that the Brauer-Manin obstruction is the only obstruction
to the Hasse principle (respectively, weak approximation) for $K$-rational points on $X_K.$

Then the Brauer-Manin obstruction is the only obstruction
to the Hasse principle (respectively, weak approximation) for 0-cycles of degree $1$ on $X_k.$
\end{thm}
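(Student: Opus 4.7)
\emph{Plan.} The strategy is to reduce statements about 0-cycles of degree $1$ on $X/k$ to statements about rational points on the base change $X_K$ for suitably chosen finite extensions $K/k$, and then to invoke the hypothesis (pt-HP) or (pt-WA) over $K$. The key enabler for this reduction is Proposition \ref{Brauer group}: it furnishes a finite Galois extension $k'/k$ such that restriction induces an isomorphism $Br(X)/Br(k) \cong Br(X_l)/Br(l)$ whenever $l/k$ is linearly disjoint from $k'/k$. This isomorphism (combined with the compatibility of the Brauer-Manin pairing with corestriction/norm) is what allows a Brauer-Manin orthogonality condition for 0-cycles on $X/k$ to be transported into a Brauer-Manin orthogonality condition for rational points on $X_K$.

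Concretely, given a family $\{z_v\}_{v\in\Omega_k}$ of local 0-cycles of degree $1$ with $\{z_v\} \perp Br(X)$ (and, in the weak approximation case, a finite set $S\subset\Omega_k$ and an integer $n>0$), I would proceed as follows. First, using Lemma \ref{moving lemma} together with the continuity of the Brauer pairing, deform each $z_v$ for $v\in S$ into a separable 0-cycle having the same image in $CH_0(X_v)/n$ and the same pairings against a finite set of generators of $Br(X)/Br(k)$. Next, a Chebotarev/Hilbert-irreducibility-type density argument, applied in concert with the moving lemma, lets us arrange that the residue fields occurring in the supports of the $z_v$ fit into a uniform pattern across $v\in S$: after splitting and regrouping terms, the cycle $z_v$ can be written (modulo $n$) as a $\mathbb{Z}$-linear combination of traces of local $K$-rational points on $X_K$, for a \emph{single} finite extension $K/k$ chosen linearly disjoint from $k'/k$. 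By Proposition \ref{Brauer group}, the corresponding family of local $K$-rational points on $X_K$ is orthogonal to $Br(X_K)$. Applying (pt-HP) (respectively (pt-WA)) to $X_K$ then produces a global $K$-rational point of $X_K$ (respectively one close to the local family), and its norm down to $k$, combined with the auxiliary pieces fixed in the previous step, assembles into a global 0-cycle on $X_k$ of degree $1$ having the desired Hasse or weak-approximation property.

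The main obstacle will be this uniformization step. The independently given local 0-cycles $z_v$ have residue-field supports that vary with $v$, and one has to move them simultaneously so that they all arise from rational points on one common base change $X_K$, with the further constraint that $K$ is linearly disjoint from $k'/k$ (so that Proposition \ref{Brauer group} applies). Achieving this requires a careful blend of the moving lemma, Bertini-type genericity, and Chebotarev density in order to control the residue fields of the closed points introduced. Once the uniformization is in place, the transfer of orthogonality via Proposition \ref{Brauer group} together with the compatibility of the Brauer-Manin pairing with $\mathrm{cores}$/norm renders the rest of the argument essentially formal.
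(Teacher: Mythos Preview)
Your high-level strategy is right---reduce to rational points on some $X_K$ via Proposition~\ref{Brauer group} and then invoke (pt-HP)/(pt-WA)---but the ``uniformization step'' that you flag as the main obstacle is indeed the whole point, and your proposal does not supply a mechanism for it. On $X$ itself, the moving lemma only perturbs a $0$-cycle within a small $v$-adic neighborhood; it gives no control over the residue fields of the closed points that appear, and there is no evident way to force all the local $z_v$ (for $v\in S$) to become traces of $K_w$-points for a \emph{single} global extension $K/k$. Invoking ``Chebotarev/Hilbert irreducibility'' does not help here, because those tools produce closed points on a scheme with prescribed splitting behavior; they do not let you deform independently given local cycles on $X$ into a common residue-field pattern.

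The paper's device is to manufacture a fibration where such uniformization is automatic: replace $X$ by $X\times\mathbb{P}^1$ (harmless, since $pr^*$ identifies $Br(X)$ with $Br(X\times\mathbb{P}^1)$ and a section pushes cycles back and forth), and work with the projection $\pi:X\times\mathbb{P}^1\to\mathbb{P}^1$. After making the $z_v$ effective of a common degree $\Delta\equiv 1\pmod{mn\delta_P}$ and separable after $\pi_*$, each $\pi_*(z_v)$ is the divisor of a monic degree-$\Delta$ polynomial $f_v$. Weak approximation on the \emph{coefficients} of polynomials---together with an Eisenstein polynomial at one auxiliary place to force irreducibility---produces a single closed point $\theta\in\mathbb{P}^1$ of degree $\Delta$ approximating all the $\pi_*(z_v)$. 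The fiber $\pi^{-1}(\theta)\simeq X_{k(\theta)}$ then carries, by the implicit function theorem, local $k(\theta)_w$-points $M_w$ close to the pieces of $z_v$, and these are orthogonal to $Br(X_{k(\theta)})$ because $\Delta\equiv 1\pmod{[k':k]}$ makes $k(\theta)$ linearly disjoint from $k'$ (so Proposition~\ref{Brauer group} applies). No Hilbert irreducibility is needed in the degree-$1$ case; the coprimality of $\Delta$ and $[k':k]$ does the job. Finally, the resulting global $k(\theta)$-point has degree $\Delta\equiv 1\pmod{n\delta_P}$ as a $0$-cycle on $X$, and one corrects to degree $1$ by adding a multiple of $nP$---a step your sketch also leaves implicit. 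The extra $\mathbb{P}^1$ is precisely what converts the intractable ``uniformize residue fields on $X$'' into the trivial ``approximate polynomials.''
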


\begin{proof}
Consider the projection $pr:X\times\mathbb{P}^1\to X,$ it has a section
$s:x\mapsto(x,0)$ where $0\in\mathbb{P}^1(k)$ is a chosen rational point.
Suppose that we can prove the statement
\begin{enumerate}
\item[$(\star)$ ]
the Brauer-Manin obstruction is the only obstruction
to the Hasse principle/weak approximation for 0-cycles of degree $1$ on $X\times\mathbb{P}^1,$
\end{enumerate}
then it is also the case for $X.$ In fact, let $\{z_v\}_v\bot Br(X)$ be a family of local 0-cycles of degree $1$
on $X,$ as $pr^*:Br(X)\to Br(X\times\mathbb{P}^1)$ is an isomorphism,
the family $\{s_*(z_v)\}_v$ on $X\times\mathbb{P}^1$ is orthogonal to $Br(X\times\mathbb{P}^1),$
where $s_*:CH_0(X)\to CH_0(X\times\mathbb{P}^1).$ The statement $(\star)$ gives a global 0-cycle of degree $1$
on $X\times\mathbb{P}^1,$ its image under $pr_*:CH_0(X\times\mathbb{P}^1)\to CH_0(X)$ gives the desired solution.

Under the same assumptions in the statement of the theorem, we are going to prove
$(\star).$ We set $\mathbb{P}^1_v=\mathbb{P}^1_{k_v}.$
Let $\{z_v\}_v\in \prod_{v\in\Omega_k}Z_0(X_v\times\mathbb{P}^1_v)$ be an arbitrary family of local 0-cycles of degree $1$
orthogonal to $Br(X\times\mathbb{P}^1),$ we fix a positive integer $n$ and a finite set $S$ of places of $k,$
we are going to approximate $z_v$ for $v\in S$ at the level $CH_0(-)/n.$
For the Hasse principle, the same argument applies.

We see from the exact sequence in the proof of Proposition \ref{Brauer group} that
$Br(X\times\mathbb{P}^1)$ is finite modulo $Br(k).$ Let $\{b_i\}_{1\leqslant i\leqslant s}$
be a complete set of representatives; we can choose a positive integer $m$
annihilating all $b_i\in Br(X\times\mathbb{P}^1)$ since the last group is torsion.
Let $k'$ be a finite extension of $k$ satisfying the condition of Proposition \ref{Brauer group},
we may also assume that $m$ is a multiple of $[k':k].$
We fix a global closed point $P$ of $X\times\mathbb{P}^1$ and denote its degree by
$\delta_P=[k(P):k].$

We denote by $\pi:X\times\mathbb{P}^1\to\mathbb{P}^1$ the projection. We fix integral models
$\mathcal{X}$ and $\mathbb{P}^1_{O_{k,S_0}}$ of $X$ and of $\mathbb{P}^1$ (proper and smooth) over $Spec(O_{k,S_0})$
where $S_0\supseteq S\cap\Omega_k^\infty$ is a certain finite set of places of $k.$
By enlarging $S_0$ if necessary, we may assume that all the $b_i$'s come from elements in
$Br(\mathcal{X}\times\mathbb{P}^1_{O_{k,S_0}}).$
For every $v\in\Omega_k\setminus S_0,$ as $Br(O_v)=0,$ the evaluation $\langle z_v,b_i\rangle_{k_v}$ is $0$
for any $i\in\{1,2,\ldots,s\}$ and for any 0-cycle on $X_v\times\mathbb{P}^1_v.$
Therefore we have
$$\sum_{v\in S_0}inv_v(\langle z_v,b_i\rangle_{k_v})=0$$ for every $b_i.$
Moreover, by enlarging $S_0$ if necessary,
we may also assume that for all $v\in\Omega_k\setminus S_0$ the reduction of $\mathcal{X}$
modulo $v$ has a $k(v)$-rational point by Lang-Weil estimation, hence $X_v$ has a $k_v$-point
by Hensel's lemma.

For each $v\in S_0,$ we write $z_v=z_v^+-z_v^-$ where $z_v^+$ and $z_v^-$ are effective 0-cycles
with disjoint supports. We pose $z_v^1=z_v+mn\delta_Pz_v^-=z_v^++(mn\delta_P-1)z_v^-,$ then
$deg(z_v^1)\equiv1(mod\mbox{ }mn\delta_P)$ and they are all effective 0-cycles.
We add to each $z_v^1$ a suitable multiple of the 0-cycle $mnP_v$ where $P_v=P\times_kk_v$
and we obtain $z_v^2$ of the same degree $\Delta\equiv1(mod\mbox{ }mn\delta_P)$ for all $v\in S_0.$
Then $\langle z_v,b_i\rangle_{k_v}=\langle z_v^1,b_i\rangle_{k_v}=\langle z_v^2,b_i\rangle_{k_v}$
since $m$ annihilates all the $b_i$'s, and the $z_v^2$'s are effective 0-cycles of degree $\Delta$
for all $v\in S_0.$ By Lemma \ref{moving lemma}, there exists for each $v\in S_0$ an effective
0-cycle $z_v^3$ close to $z_v^2$ with respect to the $b_i$'s and to $n$ such that
$\pi_*(z_v^3)$ is separable. We still have $\langle z_v,b_i\rangle_{k_v}=\langle z_v^3,b_i\rangle_{k_v}$
by continuity of the pairing. By using the last remark in \S \ref{subsec-0-cyc},
we also check that $z_v,z_v^1,z_v^2$ and $z_v^3$ have the same image
in $CH_0(X_v)/n.$

We choose a rational point $\infty\in\mathbb{P}^1(k)$ outside the supports of $\pi_*(z_v^3)$ for $v\in S_0.$
Then for each $v\in S_0$ we can write $\pi_*(z_v^3)-\Delta\infty=div(f_v)$ with $f_v\in k_v(\mathbb{P}^1_v)^*/k_v^*.$
Actually, each $f_v$ is a separable polynomial (can supposed to be monic) of degree $\Delta$ since $\pi_*(z_v)$
is effective separable and of degree $\Delta.$ We choose a place $v_0\in\Omega_k\setminus S_0,$
and choose $f_{v_0}$ a monic irreducible polynomial of degree $\Delta$ with coefficients in $k_{v_0},$
for example, an Eisenstein polynomial.
According to the weak approximation property for number fields, we obtain a monic polynomial $f$ of degree $\Delta$
with coefficients in $k$ such that it is sufficiently close to $f_v$ for all $v\in S_0\cup\{v_0\}.$

The polynomial $f$ is irreducible over $k_{v_0}$ by Krasner's lemma, a fortiori irreducible over $k,$ hence defines
a closed point $\theta$ of $\mathbb{P}^1$ of degree $\Delta.$ It is sufficiently close to
$\pi_*(z_v^3)$ for all $v\in S_0.$ More precisely, we write
$\theta_v=\theta\times_{\mathbb{P}^1}{\mathbb{P}_v^1}=\bigsqcup_{w\mid v,w\in\Omega_{k(\theta)}}Spec(k(\theta)_w)$
for $v\in\Omega_k,$ the image of $\theta$ in $Z_0(\mathbb{P}^1_v)$ is written as
$\theta_v=\sum_{w\mid v,w\in\Omega_{k(\theta)}}P_w$ where $P_w=Spec(k(\theta)_w)$ is a closed
point of ${\mathbb{P}^1_v}$ of residual field $k(\theta)_w.$
For each $v\in S_0,$ the 0-cycle $\theta_v$ is sufficiently close to $\pi_*(z^3_v),$ where
the effective separable 0-cycle $\pi_*(z^3_v)$ is written as $\sum_{w\mid v,w\in\Omega_{k(\theta)}}Q_w$ with
distinct $Q_w$'s.
Then by definition $k(\theta)_w=k_v(P_w)=k_v(Q_w),$ and $P_w$ is sufficiently close to $Q_w\in \mathbb{P}^1_v(k(\theta)_w).$
And we know that
$z^3_v$ is written as $\sum_{w\mid v,w\in\Omega_{k(\theta)}}M^0_w$ with $k_v(M^0_w)=k(\theta)_w$ and
$M^0_w\in X_v(k(\theta)_w)$ is situated
on the fiber of $\pi$ at the closed point $Q_w.$
The implicit function theorem implies that there exists a smooth $k(\theta)_w$-point $M_w$ on the fiber
$\pi^{-1}(\theta)$ close with respect to the $b_i$'s and to $n$
to $M^0_w$ for every $w\in S_0\otimes_kk(\theta).$ The closed point $M_w$ and $M_w^0$ have the same
image in $CH_0(X_v)/n,$ \emph{cf.} the last remark of \S \ref{subsec-0-cyc}.

We have for every $b_i\in Br(X\times\mathbb{P}^1)$ chosen previously, the equality on $X\times\mathbb{P}^1$
\begin{eqnarray*}
 \sum_{w\in S_0\otimes_kk(\theta)}inv_w( \langle M_w,b_i \rangle_{k(\theta)_w})\mbox{ }\mbox{ }\mbox{ }\mbox{ }\mbox{ }\mbox{ }\mbox{ }\mbox{ }&=&\sum_{w\in S_0\otimes_kk(\theta)}inv_w(b_i(M_w)) \\
=\sum_{w\in S_0\otimes_kk(\theta)}inv_w(b_i(M^0_w))\mbox{ }\mbox{ }\mbox{ }\mbox{ }\mbox{ }\mbox{ }\mbox{ }\mbox{ }\mbox{ }\mbox{ }\mbox{ }\mbox{ }\mbox{ }\mbox{ }\mbox{ }\mbox{ }& =&\sum_{v\in S_0}\sum_{w\mid v}inv_w(b_i(M^0_w))\\
=\sum_{v\in S_0}\sum_{w\mid v}inv_v(cores_{k(\theta)_w/k_v}(b_i(M^0_w)))& =&\sum_{v\in S_0}\sum_{w\mid v}inv_v( \langle M^0_w,b_i \rangle_{k_v})\\
=\sum_{v\in S_0}inv_v( \langle z^3_v,b_i \rangle_{k_v})\mbox{ }\mbox{ }\mbox{ }\mbox{ }\mbox{ }\mbox{ }\mbox{ }\mbox{ }\mbox{ }\mbox{ }\mbox{ }\mbox{ }\mbox{ }\mbox{ }\mbox{ }\mbox{ }\mbox{ }\mbox{ }\mbox{ }\mbox{ }\mbox{ }\mbox{ }& =&0
\end{eqnarray*}
We fix $M_w$ a $k(\theta)_w$-point on $\pi^{-1}(\theta)\simeq X_{k(\theta)}$ for each
$w\in\Omega_{k(\theta)}\setminus S_0\otimes_kk(\theta)$ (existence assured by the choice of $S_0$).
As indicated previously, we have $inv_w( \langle M_w,b \rangle_{k(\theta)_w})=0$
for all $b\in Br(X\times\mathbb{P}^1)$ and for all $w\notin S_0\otimes_kk(\theta)$
thanks to the integral model. Therefore,
$$\sum_{w\in \Omega_{k(\theta)}}inv_w( \langle M_w,b_i \rangle_{k(\theta)_w})=0.$$

By abuse of notation, here $b_i$ denotes also its image under the restriction
$i^*_\theta:Br(X\times\mathbb{P}^1)\to Br(\pi^{-1}(\theta))$ where $i_\theta:\pi^{-1}(\theta)\to X\times\mathbb{P}^1$
is the natural closed immersion. The above equality is viewed as a pairing on $\pi^{-1}(\theta)$
by functoriality of the Brauer-Manin pairing.

We consider an arbitrary element of $Br(X),$ it extends isomorphically to an element
of $Br(X\times\mathbb{P}^1)$ via the trivial fibration $pr:X\times\mathbb{P}^1\to X,$
then we restrict it to the fiber $\pi^{-1}(\theta)$ via $i^*_\theta$ and get an element of $Br(\pi^{-1}(\theta)),$
this gives simply its image under the restriction $Br(X)\to Br(X_{k(\theta)})$ if we identify
$\pi^{-1}(\theta)$ with $X_{k(\theta)}.$
By Proposition \ref{Brauer group}, if $k(\theta)$ and $k'$ are linearly disjoint over $k$
the $b_i$'s generate $Br(X_{k(\theta)})$ modulo $Br(k(\theta)).$
It is the case since $[k(\theta):k]=\Delta\equiv1(mod\mbox{ }[k':k])$ by construction.
Therefore $\{M_w\}_{w\in\Omega_{k(\theta)}}\bot Br(\pi^{-1}(\theta))$ since $Br(k(\theta))$ gives no
contribution to the Brauer-Manin pairing on the $k(\theta)$-variety $\pi^{-1}(\theta).$

By hypothesis, there exists a global $k(\theta)$-point on $\pi^{-1}(\theta)$ sufficiently close to $M_w$
for all $w\in S\otimes_kk(\theta),$ such that $M$ and $M_w$ have the same image in
$CH_0(\pi^{-1}(\theta)_w)/n.$ We check by construction that $M$ and $z_v$ have the same image in
$CH_0(X_v)/n$ for all $v\in S.$
The point $M$ viewed as a 0-cycle on $X$ is of degree $\Delta\equiv1(mod\mbox{ }n\delta_P),$
a suitable linear combination with the 0-cycle $nP$ gives a global 0-cycle
of degree $1$ having the image as $z_v$ in $CH_0(X_v)/n$ for all $v\in S.$
This completes the proof.
\end{proof}

\begin{rem}
The statement that we have proved is a particular case of a much more general result of the author,
Theorem 2.4 \cite{Liang2} applied to the trivial fibration
$\pi:X\times\mathbb{P}^1\to\mathbb{P}^1.$ The proof presented here is much simpler than the general case.
The same argument without taking care of the Brauer-Manin pairing and without comparison of
Brauer groups proves the following statement.
\end{rem}

\begin{prop}[personal discussion with Wittenberg]
Let $X$ be a proper smooth variety defined over a number field $k.$
For any finite extension $K/k,$ we assume that $X_K$ satisfies the Hasse principle (respectively, weak approximation)
for $K$-rational points.

Then $X$ satisfies the Hasse principle (respectively, weak approximation)
for 0-cycles of degree $1.$
\end{prop}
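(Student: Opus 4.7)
The plan is to follow the proof of Theorem \ref{prop i-ii} essentially verbatim, but to suppress every step that refers to elements of the Brauer group or to the comparison isomorphism of Proposition \ref{Brauer group}. First I would reduce to showing the statement for $X\times\mathbb{P}^1$ in place of $X$: given a family $\{z_v\}_{v\in\Omega_k}$ of local 0-cycles of degree $1$ on $X$, the section $s:X\to X\times\mathbb{P}^1$, $x\mapsto(x,0)$, produces a family $\{s_*(z_v)\}$ on the product, and a global 0-cycle of degree $1$ on the product pushes back via the first projection to yield the desired global 0-cycle on $X$, with the same approximation properties in $CH_0/n$.

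For a given positive integer $n$ and a finite set $S$ of places of $k$ (taken empty in the Hasse principle case), I would fix a global closed point $P$ of degree $\delta_P$, enlarge $S$ to a finite set $S_0$ outside of which a good integral model of $X\times\mathbb{P}^1$ exists and local points are guaranteed by Lang-Weil and Hensel, and transform each $z_v$ (for $v\in S_0$) into an effective 0-cycle $z_v^2$ of common degree $\Delta\equiv 1\pmod{n\delta_P}$ having the same image as $z_v$ in $CH_0(X_v\times\mathbb{P}^1_v)/n$, by the same split-and-add-multiples-of-$P$ procedure as in Theorem \ref{prop i-ii}. The moving lemma (Lemma \ref{moving lemma}) then further perturbs $z_v^2$ into $z_v^3$ whose image $\pi_*(z_v^3)$ under the second projection $\pi:X\times\mathbb{P}^1\to\mathbb{P}^1$ is effective and separable of degree $\Delta$. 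Crucially, no Brauer-theoretic auxiliary integer $m$ is needed here, so the congruence on $\Delta$ involves only $n\delta_P$.

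Next I would produce, by polynomial approximation on $\mathbb{P}^1$ together with Krasner's lemma (prescribing an Eisenstein polynomial at an auxiliary place $v_0\notin S_0$), a single monic polynomial $f\in k[T]$ of degree $\Delta$ approximating the polynomials $f_v$ defining $\pi_*(z_v^3)-\Delta\infty$ for $v\in S_0$ and globally irreducible; hence $f$ defines a closed point $\theta\in\mathbb{P}^1$ with $[k(\theta):k]=\Delta$. The implicit function theorem lifts each local piece of $z_v^3$ to a local $k(\theta)_w$-point $M_w$ on the fiber $\pi^{-1}(\theta)\simeq X_{k(\theta)}$ for $w\mid v\in S_0$, and the enlargement of $S_0$ supplies local points at the remaining places of $k(\theta)$. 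The hypothesis applied to the finite extension $K=k(\theta)$ then delivers a global $k(\theta)$-rational point $M\in X_{k(\theta)}$ (close to each $M_w$ at $w\in S\otimes_kk(\theta)$ in the weak approximation case). Viewed as a 0-cycle on $X$ via $X_{k(\theta)}\to X$, the point $M$ has degree $\Delta\equiv 1\pmod{n\delta_P}$, so a suitable linear combination $M-\frac{\Delta-1}{\delta_P}P$ yields a global 0-cycle of degree $1$ with the prescribed image in $CH_0(X_v)/n$ for all $v\in S$. The only non-routine step is the polynomial/Krasner construction of $\theta$, which is the same as in Theorem \ref{prop i-ii} and requires no modification; everything else becomes strictly simpler in the absence of a Brauer obstruction to track.
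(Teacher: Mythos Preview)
Your proposal is correct and is precisely the approach indicated in the paper: the paper's own proof of this proposition consists of the single sentence ``The same argument without taking care of the Brauer-Manin pairing and without comparison of Brauer groups proves the following statement,'' and you have carried this out in detail, including the correct observation that the auxiliary integer $m$ and the linear-disjointness condition on $k(\theta)$ become unnecessary once the Brauer group is removed from the picture.
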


\subsection{Generalized Hilbert's irreducibility theorem}\label{Hilbert}
In this subsection,
we introduce the notion of generalized Hilbertian subsets and state an approximation property for these subsets
of $\mathbb{P}^1.$
This property is the crucial point for the proof in \S \ref{ii to ii'}.

\begin{defn}
Let $X$ be a geometrically integral variety defined over a number field $k.$
A subset $\textsf{Hil}\subset X$ of closed points of $X$ is called a
\emph{generalized Hilbertian subset} if there exists a finite \'etale morphism
$Z\buildrel\rho\over\to U\subset X$ where $U$ is a non-empty open set of $X$
and $Z$ is an integral variety (not necessarily geometrically integral over $k$)
such that $\textsf{Hil}$ equals the set of closed points $\theta$ of $U$
having connected fiber $\rho^{-1}(\theta).$
\end{defn}

\begin{rem}
If $\textsf{Hil}$ is a generalized Hilbertian subset of $X,$ then
$\textsf{Hil}\cap X(k)$ is a (classic) Hilbertian subset of $X,$ \emph{cf.} \cite[3.2]{Harari} for definition.
A classic Hilbertian subset can be empty, but a generalized Hilbertian subset is always non-empty,
\emph{cf.} \cite[\S 1]{Liang2}.
\end{rem}

\begin{prop}\label{Hilbertsubset}
Let $k$ be a number field and let $S$ be a finite set of places of $k.$
Let $\textsf{Hil}$ be a generalized Hilbertian subset of the $k$-projective line
$\mathbb{P}^1.$

Then for an arbitrary family of effective separable local 0-cycles $\{z_v\}_{v\in S}$ of degree $\Delta>0$ on $\mathbb{P}^1_v,$
there exists a global closed point $\theta$ of $\mathbb{P}^1$ of degree $\Delta$ such that
$\theta\in\textsf{Hil}$ and such that the image of $\theta$ in $Z_0(\mathbb{P}^1_v)$ is sufficiently close to $z_v$ for all $v\in S.$
\end{prop}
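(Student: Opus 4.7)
The plan is to parametrize effective separable degree-$\Delta$ 0-cycles on $\mathbb{P}^1$ by their defining monic polynomials, and thereby translate the statement into a version of Hilbert's irreducibility theorem with weak approximation on the parameter space $\mathbb{A}^\Delta_k.$ First I would choose a rational point $\infty\in\mathbb{P}^1(k)$ lying outside the supports of all the $z_v$ for $v\in S,$ which is possible because each support is finite and $\mathbb{P}^1(k)$ is infinite; this identifies $\mathbb{P}^1\setminus\{\infty\}$ with $\mathbb{A}^1_k.$ Each $z_v$ is then the divisor of a monic squarefree polynomial $f_v(x)\in k_v[x]$ of degree $\Delta,$ i.e.\ a $k_v$-point $\tau_v$ of $T=\mathbb{A}^\Delta_k=\mathrm{Spec}\,k[a_1,\ldots,a_\Delta],$ the affine space parametrizing monic polynomials of degree $\Delta.$

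Next I would set up the Hilbert picture over $T.$ Let $V\subset T\times\mathbb{A}^1$ be the universal hypersurface cut out by $x^\Delta+a_1x^{\Delta-1}+\cdots+a_\Delta,$ so the first projection $V\to T$ is finite flat of degree $\Delta$ and the second projection $\pi\colon V\to\mathbb{A}^1$ exhibits $V$ as the universal 0-cycle of degree $\Delta.$ Shrinking $T$ to a non-empty open subset $T^\circ$ over which $\pi(V_\tau)\subset U$ (a condition satisfied generically, and by an immediate perturbation we can assume the $\tau_v$ already lie in $T^\circ$), form the fibre product $\tilde V=V\times_U Z.$ Then $\tilde V\to V$ is finite \'etale, and $\tilde V\to T^\circ$ has geometrically integral generic fibre because $Z\to U$ is integral. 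For a point $\tau\in T^\circ(k),$ the polynomial $f_\tau$ produces a closed subscheme of $\mathbb{P}^1$ whose underlying reduced point we denote $\theta_\tau;$ a direct check gives that $\theta_\tau$ is a closed point of degree exactly $\Delta$ lying in $\textsf{Hil}$ if and only if $\tilde V_\tau$ is integral over $k,$ since the latter condition forces $V_\tau$ to be integral (so $f_\tau$ is irreducible of degree $\Delta$) and is then equivalent to the connectedness of $\rho^{-1}(\theta_\tau)=Z\times_U\theta_\tau.$

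Finally I would invoke generalized Hilbert irreducibility with weak approximation on $T\simeq\mathbb{A}^\Delta_k.$ The subset $\textsf{Hil}'\subset T^\circ$ of those $\tau$ for which $\tilde V_\tau$ is integral is a generalized Hilbertian subset of $T^\circ$ in the sense of the definition above (after replacing $\tilde V\to T^\circ$ by a Galois closure if one wants a finite \'etale presentation). The key input is the classical fact that on a $k$-rational variety, and in particular on $\mathbb{A}^\Delta_k,$ any generalized Hilbertian subset meets $T(k)$ in a set which is dense in $\prod_{v\in S}T(k_v)$ for every finite $S\subset\Omega_k;$ this is an Ekedahl-type arithmetic Bertini statement, also established in \cite[\S 1]{Liang2}. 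Applying it to $\textsf{Hil}'$ and $(\tau_v)_{v\in S}$ yields $\tau\in T(k)\cap\textsf{Hil}'$ arbitrarily close to $(\tau_v)_{v\in S}.$ The corresponding global closed point $\theta\subset\mathbb{P}^1$ has degree $\Delta$ and lies in $\textsf{Hil};$ by continuity of the roots of a monic polynomial in its coefficients, its image in $Z_0(\mathbb{P}^1_v)$ is sufficiently close to $z_v$ for every $v\in S.$

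The principal obstacle is the third step: Hilbert's theorem alone gives density of generalized Hilbertian subsets in $T(k)$ for the Zariski topology, but what is needed is density in the adelic topology at the places of $S,$ which requires a genuine refinement. For $T=\mathbb{A}^\Delta$ this refinement is available, and the reduction of the original approximation problem on $\mathbb{P}^1$ to it is precisely the content of the first two steps above.
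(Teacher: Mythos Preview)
Your approach is correct and matches what the paper intends. The paper does not give a proof here; it cites \cite[Lemma 3.4]{Liang1} and explicitly describes the proposition as ``a generalized version for 0-cycles of Theorem 1.3 of Ekedahl \cite{Ekedahl}.'' Your argument --- parametrize effective separable degree-$\Delta$ cycles on $\mathbb{P}^1$ by the coefficient space $T=\mathbb{A}^\Delta$, pull the \'etale cover $Z\to U$ back along the universal root map $V\to U$ to obtain an integral finite \'etale cover $\tilde V\to T^\circ$, then apply Ekedahl's Hilbert irreducibility with weak approximation on $\mathbb{A}^\Delta$ --- is exactly this reduction.

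One small correction: the assertion that ``$\tilde V\to T^\circ$ has geometrically integral generic fibre'' is too strong and in fact false (already for $\Delta=1$, where $\tilde V\to T^\circ$ is just $Z\to U$, whose generic fibre is a nontrivial finite extension of $k(U)$). What is true, and what you actually use, is that $\tilde V$ is \emph{integral}: the map $V\to U$ is an open piece of the projection $\mathbb{A}^{\Delta-1}\times\mathbb{A}^1\to\mathbb{A}^1$, so $\tilde V=V\times_UZ$ is an open piece of $\mathbb{A}^{\Delta-1}\times Z$, which is integral because $Z$ is. This is precisely the hypothesis in the definition of a generalized Hilbertian subset, so your parenthetical about passing to a Galois closure is unnecessary. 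With this adjustment your argument goes through as written.
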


The property above was stated and proved by the author in a more general way for curves of arbitrary genus
in \cite{Liang1}. It was used to discuss properties of 0-cycles on fibrations over a curve of arbitrary genus,
the simpler statement here concerns only $\mathbb{P}^1,$ one can find a complete proof in \cite[Lemma 3.4]{Liang1}.

In some sense, this is a variant of Hilbert's irreducibility theorem,
we can approximate effective local 0-cycles by a global closed point in the generalized Hilbertian subset
$\textsf{Hil}:$ it is a generalized version for 0-cycles of Theorem 1.3 of Ekedahl \cite{Ekedahl}.

\subsection{Proof of \textmd{$\mbox{(0cyc-WA)}\Rightarrow\mbox{(0cyc-WA)}^\delta$}}\label{ii to ii'}

Since the sequence $(E)$ concerns 0-cycles of all degrees, it is natural to pass from 0-cycles of degree $1$
to 0-cycles of degree $\delta$ for arbitrary $\delta.$ Concerning this, we prove the following proposition.
This step plays a crucial role in the whole proof.
Comparing with \S \ref{i to ii}, the difference is the utilization of
the generalized Hilbert's irreducibility theorem explained in \S \ref{Hilbert}, which is the key point of this step.

\begin{prop}\label{prop ii-ii'}
Let $X$ be a proper smooth variety defined over a number field $k$ such that
the N\'eron-Severi group $NS(\overline{X})$ is torsion-free and
$H^1(\overline{X},\mathcal{O}_{\overline{X}})=H^2(\overline{X},\mathcal{O}_{\overline{X}})=0.$
For any finite extension $K/k,$ we assume that the Brauer-Manin obstruction is the only obstruction
to weak approximation for 0-cycles of degree $1$ on $X_K.$

Then for all integers $\delta$ the Brauer-Manin obstruction is the only obstruction
to weak approximation for 0-cycles of degree $\delta$ on $X_k.$
\end{prop}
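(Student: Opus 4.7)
The plan is to follow the structure of the proof of Theorem \ref{prop i-ii}, replacing the Krasner/Eisenstein construction (which only produced a closed point whose degree was forced to be $\equiv 1$ modulo $[k':k]$) by the generalized Hilbert irreducibility theorem of \S \ref{Hilbert}, which accommodates arbitrary residue degrees. As in \S \ref{i to ii}, first reduce the statement to the trivial fibration $\pi: X\times\mathbb{P}^1\to \mathbb{P}^1$: the section $s(x)=(x,0)$ and the projection $pr$ together with the isomorphism $pr^*:Br(X)\xrightarrow{\sim} Br(X\times\mathbb{P}^1)$ allow one to replace a Brauer-orthogonal family $\{z_v\}$ of degree $\delta$ on $X$ by $\{s_*(z_v)\}$ on $X\times\mathbb{P}^1$.

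Given such a family on $X\times\mathbb{P}^1$, a positive integer $n$, and a finite set $S$ of places, the setup is parallel to that of Theorem \ref{prop i-ii}. Fix a finite set $\{b_i\}_{1\leq i\leq s}$ of representatives of $Br(X\times\mathbb{P}^1)/Br(k)$, an integer $m$ annihilating them, a global closed point $P$ of degree $\delta_P$, and a finite extension $k'/k$ furnished by Proposition \ref{Brauer group}. Enlarge $S_0\supset S$ so that the $b_i$'s extend to an integral model of $X\times\mathbb{P}^1$ and $X_v(k_v)\neq\emptyset$ for $v\notin S_0$. By the same effectivization-plus-moving-lemma argument as in the proof of Theorem \ref{prop i-ii} (now with the target congruence class $\delta$ in place of $1$), adjust each $z_v$, $v\in S_0$, to an effective 0-cycle $z_v^3$ of a common degree $\Delta\equiv \delta \pmod{mn\delta_P}$ with $\pi_*(z_v^3)$ separable, without altering its image in $CH_0(X_v)/n$ or its Brauer pairings against the $b_i$'s.

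The crucial new input enters when producing a global closed point $\theta$ of $\mathbb{P}^1_k$ of degree $\Delta$. Consider the \'etale cover $\rho: \mathbb{P}^1_{k'}\to \mathbb{P}^1_k$, whose source is integral (though not geometrically integral over $k$), and the associated generalized Hilbertian subset $\textsf{Hil}\subset\mathbb{P}^1$: a closed point $\theta$ belongs to $\textsf{Hil}$ exactly when $\rho^{-1}(\theta)=\mathrm{Spec}\bigl(k(\theta)\otimes_k k'\bigr)$ is connected, i.e.\ when $k(\theta)$ and $k'$ are linearly disjoint over $k$. Applying Proposition \ref{Hilbertsubset} to the effective separable family $\{\pi_*(z_v^3)\}_{v\in S_0}$ of degree $\Delta$ and to $\textsf{Hil}$ yields a global closed point $\theta$ of degree $\Delta$ which approximates $\pi_*(z_v^3)$ at each $v\in S_0$ and for which $k(\theta)$ is linearly disjoint from $k'$ over $k$.

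From this point on the argument proceeds exactly as in \S \ref{i to ii}: lift $z_v^3$ via the implicit function theorem to a family of $k(\theta)_w$-points $\{M_w\}$ on $\pi^{-1}(\theta)\simeq X_{k(\theta)}$, verify by the usual local-global invariant computation that $\{M_w\}\perp\{b_i\}$, and deduce from Proposition \ref{Brauer group} together with the linear disjointness that the $b_i$'s generate $Br(X_{k(\theta)})/Br(k(\theta))$, whence $\{M_w\}\perp Br(X_{k(\theta)})$. The hypothesis (0cyc-WA) applied over $k(\theta)$ produces a global 0-cycle of degree $1$ on $X_{k(\theta)}$ approximating $\{M_w\}$ in $CH_0/n$ at the places above $S$; viewed as a 0-cycle on $X_k$ this has degree $\Delta$, and since $n\delta_P\mid\Delta-\delta$ a suitable correction by $P$ furnishes the required global 0-cycle of degree $\delta$. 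The principal obstacle is simultaneously arranging the linear disjointness of $k(\theta)$ with $k'$, the approximation of the $\pi_*(z_v^3)$'s and the prescribed degree $\Delta$; this is precisely what the generalized Hilbert irreducibility theorem of \S \ref{Hilbert} is designed to deliver.
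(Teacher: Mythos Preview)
Your proposal is correct and follows essentially the same approach as the paper: run the proof of Theorem~\ref{prop i-ii} with target congruence class $\delta$ instead of $1$, and replace the Krasner/Eisenstein polynomial construction by Proposition~\ref{Hilbertsubset} applied to the generalized Hilbertian subset defined by $\mathbb{P}^1_{k'}\to\mathbb{P}^1$, which guarantees linear disjointness of $k(\theta)$ and $k'$ without any congruence condition on $\Delta$. The paper's proof is terser but the strategy is identical.
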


\begin{proof}
First of all, we remark that we may assume (pt-HP)/(pt-WA) instead of (0cyc-HP)/(0cyc-WA) stated here;
no change is needed in the following proof.

We follow the proof of Theorem \ref{prop i-ii}, the degree of
$k(\theta)$ over $k$ is $\Delta$ with
$\Delta\equiv\delta(mod\mbox{ }[k':k]).$
If $\delta\neq1$ we cannot obtain directly by degree argument the linear disjointness
of $k(\theta)$ and $k'$ over $k.$ However, in order to be able to apply Proposition \ref{Brauer group}
we need the notion of generalized Hilbertian subset to obtain the linear disjointness.
In fact, let us consider the
finite \'etale morphism $\mathbb{P}^1_{k'}\to \mathbb{P}^1,$ it defines a generalized Hilbertian subset
$\textsf{Hil}$ of $\mathbb{P}^1.$ A closed point $\theta$ of $\mathbb{P}^1$ belongs to $\textsf{Hil}$
if and only if $\theta\times_{\mathbb{P}^1}\mathbb{P}^1_{k'}\simeq Spec(k(\theta)\otimes_kk')$
is connected, \emph{i.e.} the fields $k(\theta)$ and $k'$ are linearly disjoint over $k.$
To complete the proof, it suffices
to find a closed point $\theta$ of $\mathbb{P}^1$ such that $\theta$ is contained in $\textsf{Hil}$
and such that $\theta$ is sufficiently close to $\pi_*(z_v^3)$ for all $v\in S_0.$
For this purpose, we replace
the argument using polynomials in the proof of Theorem \ref{prop i-ii} by
Proposition \ref{Hilbertsubset}, which applies since the 0-cycles $\pi_*(z_v^3)$
are effective separable and of positive degree $\Delta.$
\end{proof}

\begin{rem}
The additional $\mathbb{P}^1$ that we introduced in the proof of \ref{prop i-ii} and \ref{prop ii-ii'}
does not affect much the Chow groups and the Brauer groups, but it has two advantages.
First, it gives us an extra $1$-dimensional space to move the 0-cycles; even if the 0-cycles cannot move freely along
the direction of $X,$ we can still move them along the direction of $\mathbb{P}^1.$ Second, it gives us
a structure of fibration $X\times\mathbb{P}^1\to\mathbb{P}^1$ and fibration methods can be applied.
That is exactly what we have done in the proof.
\end{rem}

\subsection{Exactness of $(E)$ for rationally connected varieties}\label{E for RC}
In this subsection, we suppose further that $X$ is rationally connected.
We prove the proposition stated below, with which we arrive at the implication
$\mbox{(0cyc-WA)}^\delta\Rightarrow\mbox{(\textsc{Exact})}$ for rationally connected varieties.
It is a statement without base field extensions.
The proof is essentially an argument of Wittenberg \cite[Proposition 3.1]{Wittenberg}.
In the case where $X$ is a variety fibered over $\mathbb{P}^1$ with rationally connected generic fiber, the
following proposition is just a special case of Proposition 3.1 of \cite{Wittenberg}.
We rewrite a simplified proof here for the convenience of the reader.

The sequence $(E)$ concerns all the places $v\in\Omega_k$ while the weak approximation property concerns only
finitely many places $v\in S,$ the theorem of Koll\'ar/Szab\'o on Chow groups of 0-cycles of rationally connected varieties
fills the gap.

\begin{prop}\label{ii' to E}
Let $X$ be a proper, smooth, rationally connected variety defined over a number field $k.$
We assume that for all integers $\delta$ the Brauer-Manin obstruction is the only obstruction to
weak approximation for 0-cycles of degree $\delta.$

Then the sequence $(E)$ is exact for $X.$
\end{prop}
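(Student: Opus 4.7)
The plan is to unwind the inverse limit in $(E)$ and to apply the hypothesis $(\mbox{0cyc-WA})^\delta$ level by level. Fix a family $(\alpha_v)_{v\in\Omega_k}\in\prod_v CH'_0(X_v)^\chapeau$ in the right-hand kernel of $(E)$, and recall that $CH_0(X)^\chapeau=\varprojlim_n CH_0(X)/n$. For each positive integer $n$, I aim to produce $\alpha_n\in CH_0(X)/n$ whose image in $CH'_0(X_v)/n$ agrees with $\alpha_v\bmod n$ for every $v$, and then to assemble the $\alpha_n$ into an element of the inverse limit by a compactness argument.

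The crucial geometric ingredient from rational connectedness is the theorem of Koll\'ar--Szab\'o: there is a finite set $T_n\subset\Omega_k$ such that for every $v\notin T_n$, the variety $X_v$ has a $k_v$-rational point and the degree map gives an isomorphism $CH_0(X_v)/n\simeq\mathbb{Z}/n$. Using Proposition \ref{finiteness of Br} together with the integral-model argument employed in the proof of Theorem \ref{prop i-ii}, I enlarge $T_n$ so that for $v\notin T_n$ the pairing $\langle\cdot,b\rangle_{k_v}$ vanishes on $Z_0(X_v)$ for every $b\in Br(X)$. Consequently, for $v\notin T_n$ the reduction $\alpha_v\bmod n$ is controlled entirely by its degree modulo $n$.

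Now at level $n$, I pick integral representatives $z_v\in Z_0(X_v)$ of $\alpha_v\bmod n$ for $v\in T_n$, and for $v\notin T_n$ set $z_v=\delta_v\cdot P_v$ with $P_v\in X_v(k_v)$ and $\delta_v\in\mathbb{Z}$ lifting $\deg(\alpha_v)\bmod n$. By adding suitable multiples of $n$ times local 0-cycles at places in $T_n$, I arrange that the family shares a common global degree $\delta$. The resulting family is orthogonal to $Br(X)$: the contribution from $v\in T_n$ matches that of $(\alpha_v)$, which vanishes by hypothesis, and the contribution from $v\notin T_n$ is trivial by the choice of $T_n$. Applying $(\mbox{0cyc-WA})^\delta$ then yields a global 0-cycle $z_n\in Z_0(X)$ of degree $\delta$ with $z_n\equiv z_v$ in $CH_0(X_v)/n$ for every $v\in T_n$; for $v\notin T_n$ equality in $CH_0(X_v)/n\simeq\mathbb{Z}/n$ follows from matching degrees.

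Setting $\alpha_n=[z_n]\in CH_0(X)/n$ solves the lifting problem at level $n$, and the $\alpha_n$ can be assembled into an element of $\varprojlim_n CH_0(X)/n$ by a standard Mittag--Leffler/Tychonoff argument inside the profinite group $CH_0(X)^\chapeau$. The main obstacle I anticipate is arranging the common degree $\delta$ consistently as $n$ varies: one needs the local degrees $\deg(\alpha_v)\in\widehat{\mathbb{Z}}$ to coincide across $v$, so that a single profinite $\delta$ is available. I expect to handle this by exploiting that the hypothesis is assumed for \emph{every} $\delta$ (so the levelwise constructions can use different $\delta_n$'s as long as they form a compatible sequence in $\widehat{\mathbb{Z}}$) and, where needed, by invoking a global 0-cycle of degree $1$ coming from the implication $(\mbox{0cyc-WA})^\delta\Rightarrow(\mbox{0cyc-HP})$ once the constant family of degree-$1$ local cycles is shown to be Brauer-orthogonal.
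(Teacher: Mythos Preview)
Your level-by-level strategy has a genuine gap at the assembly step. You invoke a ``Mittag--Leffler/Tychonoff argument inside the profinite group $CH_0(X)^\chapeau$'', but neither condition is available: the transition maps between the sets of admissible $\alpha_n$ have no reason to be surjective, and the groups $CH_0(X)/n$ are not known to be finite. Indeed, $A_0(X)$ is of finite exponent by Lemma~\ref{finite exp ker}, but its finiteness for a rationally connected variety over a number field is a conjecture of Colliot-Th\'el\`ene, not a theorem. So your inverse system consists of nonempty cosets in possibly infinite groups, and nothing forces the limit to be nonempty.

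The paper bypasses this entirely. Rather than work level by level and pass to a limit, it first uses Lemma~\ref{step1} to reduce to a family $\{z_v\}\in\prod_v CH_0(X_v)$ of honest local 0-cycles of a common integral degree $\delta$ (this is also where your degree-consistency worry is resolved: one pairs against $Br(k)$ to get a common $\hat\delta\in\widehat{\mathbb Z}$, then writes $\hat\delta=\delta+\widehat{deg}(\hat z)$ using that $Coker(\widehat{deg})$ has finite exponent and $\widehat{\mathbb Z}/\mathbb Z$ is divisible, and subtracts the image of $\hat z\in CH_0(X)^\chapeau$). Then it applies Lemma~\ref{finite exp ker} again on the local side: there is a \emph{single} $n$ annihilating $A_0(X_v)$ for all $v$ at once, so the system $CH'_0(X_v)/m$ stabilizes for $n\mid m$ and $CH'_0(X_v)^\chapeau$ is already captured at level $n$. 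Combined with Lemma~\ref{KS thm} to choose a finite $S$ outside which $CH_0(X_v)\simeq\mathbb Z$, one application of the hypothesis at this fixed $n$ and $S$ gives exactness directly---no limit argument on the global side is needed.
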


In order to prove the proposition, we need several lemmas.

\begin{lem}[\cite{Wittenberg}, Lemmas 1.11, 1.12]\label{finite exp}
Let $f:A\to B$ be a homomorphism of abelian groups, we denote by $\hat{f}:A^\chapeau\to B^\chapeau$
the inverse limit $\varprojlim_n$ of $f/n:A/nA\to B/nB.$

(1) If $Coker(f)$ is of finite exponent, then the induced homomorphism $Coker(f)\to Coker(\hat{f})$ is injective.

(2) If $Coker(f)$ and $Ker(f)$ are of finite exponent and if $Ker(B\buildrel{.n}\over\to B)$ is
finite for all integers $n>0,$ then the induced homomorphism $Ker(f)\to Ker(\hat{f})$ is surjective.
\end{lem}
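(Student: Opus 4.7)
The plan is to use right-exactness of $M\mapsto M/nM$ together with the Mittag-Leffler behaviour of inverse systems of finite or finite-exponent abelian groups. Write $K=\ker(f)$, $I=\mathrm{Im}(f)$ and $C=\mathrm{Coker}(f)$. The key elementary fact I will invoke throughout is that if an abelian group $M$ has finite exponent $e$, then $M/nM$ stabilises at $M$ once $e\mid n$, so $M^\chapeau=M$; moreover the inverse limit $\varprojlim_n M[n]$ with transition maps $M[nm]\to M[n]$ given by multiplication by $m$ (as provided by the snake lemma applied to multiplication by $n$) vanishes, because for any element $(y_n)$ of this limit, applying the transition with $m=e$ gives $y_n=e\,y_{en}$ and the right-hand side lies in $eM=0$.

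For part (1), I expect that only right-exactness of $\otimes\mathbb{Z}/n$ is needed. Given $b\in B$ whose class in $B^\chapeau$ lies in the image of $\hat{f}$, one has for each $n$ an expression $b=f(a_n)+nc_n$ with $a_n\in A$ and $c_n\in B$. Reducing modulo $I$ gives $\bar b\in nC$ for every $n$; choosing $n$ equal to the exponent of $C$ then forces $\bar b=0$, which is the desired injectivity of $\mathrm{Coker}(f)\to\mathrm{Coker}(\hat f)$.

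For part (2), the strategy is to factor $\hat f$ as $A^\chapeau\twoheadrightarrow I^\chapeau\hookrightarrow B^\chapeau$ and then identify the kernel of the left map with the image of $K$. The injectivity $I^\chapeau\hookrightarrow B^\chapeau$ comes from the snake lemma applied to $0\to I\to B\to C\to 0$ with vertical arrows multiplication by $n$: one extracts an exact sequence $0\to D_n\to I/nI\to B/nB$ in which $D_n$ is a quotient of $C[n]$, so that $\ker(I^\chapeau\to B^\chapeau)=\varprojlim D_n$, and this limit vanishes by the key fact above applied to $C$. The description of $\ker(A^\chapeau\to I^\chapeau)$ comes from the snake lemma applied to $0\to K\to A\to I\to 0$: one obtains short exact sequences $0\to E_n\to K/nK\to J_n\to 0$ and $0\to J_n\to A/nA\to I/nI\to 0$, where $J_n$ is the image of $K/nK$ in $A/nA$ and $E_n$ is a quotient of $I[n]\subset B[n]$, hence finite by hypothesis. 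Surjective transitions on $\{J_n\}$ together with Mittag-Leffler on the finite system $\{E_n\}$ then yield $0\to\varprojlim J_n\to A^\chapeau\to I^\chapeau\to 0$ and a surjection $K=K^\chapeau\twoheadrightarrow\varprojlim J_n$, so that the image of $K$ in $A^\chapeau$ is precisely $\ker(\hat f)$; this is the surjectivity being claimed.

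The main obstacle is bookkeeping rather than any single deep step: one has to set up several auxiliary inverse systems coming from the two snake lemmas ($\{D_n\}$, $\{E_n\}$, $\{J_n\}$ as above, with $\{C[n]\}$ and $\{I[n]\}$ appearing implicitly) and identify, for each, which of the three hypotheses --- finite exponent of $K$, finite exponent of $C$, finiteness of $B[n]$ --- is the one making the relevant inverse or derived limit vanish. The three hypotheses enter in three genuinely different ways, and keeping straight which one controls which is the only real pitfall.
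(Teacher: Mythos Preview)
The paper does not supply its own proof of this lemma; it is simply quoted with attribution to \cite{Wittenberg}, Lemmas~1.11 and~1.12. Your argument is correct and is the standard one: right-exactness of $-\otimes\mathbb{Z}/n\mathbb{Z}$ for part~(1), and for part~(2) the two snake-lemma sequences together with the six-term $\varprojlim$--$\varprojlim^1$ sequence and Mittag--Leffler.

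One small point worth making explicit: when you deduce $\varprojlim_n D_n=0$ ``by the key fact above applied to $C$'', you are really using that your computation shows the transition $C[en]\to C[n]$ is the zero map (multiplication by the exponent $e$), so the inverse system $\{C[n]\}$ is pro-zero and hence so is any quotient system such as $\{D_n\}$. The bare statement $\varprojlim_n C[n]=0$ together with levelwise surjections $C[n]\twoheadrightarrow D_n$ would not by itself force $\varprojlim_n D_n=0$, since inverse limits are not right exact; it is the stronger pro-zero property that does the work. This is implicit in what you wrote, but it is the one place a reader might pause.
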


\begin{lem}[\cite{CT05}, Proposition 11]\label{finite exp ker}
Let $X$ be a proper, smooth, rationally connected variety defined over a field $k$ of characteristic zero.

Then there exists an integer $n>0,$ such that for all extensions $k'$ of $k,$ the group
$A_0(X_{k'})$ is annihilated by $n.$
\end{lem}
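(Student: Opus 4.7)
The plan is to deduce a uniform annihilator from a decomposition of the diagonal of $X$, of Bloch--Srinivas type, the rational connectedness hypothesis being used only to guarantee that $A_0$ vanishes over any algebraically closed extension of $k$.

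First I would record that, since $X$ is smooth proper and rationally connected in characteristic zero, any two closed points of $X_\Omega$ are joined by a rational curve for every algebraically closed extension $\Omega/k$, hence are rationally equivalent; consequently $A_0(X_\Omega)=0$. Fix once and for all a 0-cycle $z_0$ on $X$ of some positive degree $d_0$ (which exists because $X$ is geometrically integral). Apply the vanishing with $\Omega=\overline{k(X)}$: the degree-zero class $d_0\cdot\eta_X - z_0\otimes k(X)$ in $A_0(X_{k(X)})$, where $\eta_X$ is the tautological $k(X)$-rational point, dies after base change to $\Omega$. Writing $A_0(X_\Omega)$ as the direct limit of $A_0(X_L)$ over finite extensions $L$ of $k(X)$ and using the norm (which, composed with pull-back, is multiplication by $[L:k(X)]$), the class is annihilated by some $N_1>0$ already in $A_0(X_{k(X)})$, so $N_1 d_0\,\eta_X = N_1(z_0\otimes k(X))$ in $CH_0(X_{k(X)})$.

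Next I would spread this equality across $X\times X$ via the localization sequence for the projection $pr_1\colon X\times X\to X$. Since $CH_0(X_{k(X)})=\varinjlim_{U}CH_d(U\times X)$ as $U$ ranges over dense opens of $X$, and the diagonal $\Delta_X$ and the cycle $X\times z_0$ respectively lift the two sides of the previous equality, the surjectivity $CH_d(X\times X)\twoheadrightarrow CH_d(U\times X)$ from localization yields, for some proper closed $Y\subsetneq X$, an identity in $CH_d(X\times X)$
$$N\,\Delta_X \;=\; X\times z_0' \;+\; \Gamma,$$
where $d=\dim X$, $N=N_1d_0$, $z_0'=N_1 z_0$, and $\Gamma$ is supported on $Y\times X$. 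The data $(N, z_0', Y, \Gamma)$ depends only on $X/k$. Then, for any extension $k'/k$ and any $z\in A_0(X_{k'})$, I act by both sides as correspondences on $X_{k'}$: the diagonal gives $Nz$; the term $X\times z_0'$ gives $\deg(z)\cdot z_{0,k'}'=0$ since $\deg(z)=0$; and for $\Gamma$ the classical moving lemma on the smooth projective variety $X_{k'}$ allows me to replace $z$ by a rationally equivalent 0-cycle whose support avoids $Y_{k'}$, after which $pr_1^*(z)$ and $\Gamma_{k'}$ have disjoint supports and $\Gamma_*(z)=0$. Thus $Nz=0$ in $A_0(X_{k'})$ with $N$ independent of $k'$.

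The main obstacle is the spreading step: one must argue rigorously that a generic rational equivalence on the fiber of $pr_1$ over $\eta_X$ propagates to a global identity in $CH_d(X\times X)$ with error supported on a proper closed subscheme $Y\times X$. This is the crux of the Bloch--Srinivas technique. Once it is in hand, the correspondence computation combined with the moving lemma for 0-cycles on smooth projective varieties concludes formally.
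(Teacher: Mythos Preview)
The paper does not supply its own proof of this lemma: it is quoted verbatim as \cite{CT05}, Proposition~11, and the text moves directly to the next lemma. So there is nothing in the present article to compare against except the citation itself.

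Your argument via a Bloch--Srinivas decomposition of the diagonal is exactly the method behind the cited result, and the steps you outline are correct. The vanishing of $A_0$ over algebraically closed extensions gives, after a norm/transfer argument, a torsion relation in $CH_0(X_{k(X)})$; localisation for $CH_d$ on $X\times X$ spreads this to $N\Delta_X = X\times z_0' + \Gamma$ with $\Gamma$ supported on $Y\times X$; and letting this act as a correspondence on degree-zero $0$-cycles yields $Nz=0$ uniformly in $k'$. That is precisely Colliot-Th\'el\`ene's proof.

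One small point: you invoke the moving lemma ``on the smooth projective variety $X_{k'}$'', but the statement only assumes $X$ proper. Since $A_0$ is a birational invariant of smooth proper varieties in characteristic zero, you may first replace $X$ by a smooth projective model (Chow's lemma plus Hironaka), after which your use of the moving lemma is legitimate. With that reduction made explicit, the proposal is complete.
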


For any proper smooth $k$-variety, let us denote by $deg_{k'}:CH_0(X_{k'})\to\mathbb{Z}$
the degree map over any field extension $k'$ of $k,$ and denote by $\widehat{deg}_{k'}:CH_0(X_{k'})^\chapeau\to\widehat{\mathbb{Z}}$
its completion.

\begin{lem}\label{finite exp coker}
Let $X$ be a proper, smooth, rationally connected variety defined over a field $k$ of characteristic zero.

Then there exists an integer $n>0,$ such that for all extensions $k'$ of $k,$
the groups $Coker(deg_{k'})$ and $Coker(\widehat{deg}_{k'})$ are annihilated by $n.$
\end{lem}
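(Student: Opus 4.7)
The plan is to exhibit a single integer $n$ that simultaneously bounds the index of $X_{k'}$ for every field extension $k'/k,$ and then to deduce the annihilation statement for both $deg_{k'}$ and its completion $\widehat{deg}_{k'}$ from this uniform bound. Since $X$ is a non-empty proper smooth $k$-variety, it possesses at least one closed point $P,$ whose residue field $k(P)$ is a finite extension of $k.$ Setting $n=[k(P):k]$ produces a 0-cycle of degree $n$ on $X.$

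For any extension $k'/k,$ the base change $X_{k'}\to X$ is flat, and flat pull-back sends $P$ to a 0-cycle $P_{k'}$ on $X_{k'}$ of the same degree $n.$ (In characteristic zero $k(P)\otimes_k k'$ is a product of finite separable field extensions of $k'$ of total degree $n,$ which makes the preservation of degree immediate.) In particular $n$ lies in the image of $deg_{k'}$ uniformly in $k',$ so $Coker(deg_{k'})$ is a quotient of $\mathbb{Z}/n\mathbb{Z}$ and is killed by $n.$

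To pass to the completed map, recall that $CH_0(X_{k'})^\chapeau=\varprojlim_m CH_0(X_{k'})/m$ carries a natural $\widehat{\mathbb{Z}}$-module structure and that $\widehat{deg}_{k'}$ is $\widehat{\mathbb{Z}}$-linear. The class of $P_{k'}$ provides an element $\tilde{P}_{k'}\in CH_0(X_{k'})^\chapeau$ with $\widehat{deg}_{k'}(\tilde{P}_{k'})=n.$ Multiplying by an arbitrary element of $\widehat{\mathbb{Z}}$ then shows $n\widehat{\mathbb{Z}}\subseteq\widehat{deg}_{k'}(CH_0(X_{k'})^\chapeau),$ so $Coker(\widehat{deg}_{k'})$ is a quotient of $\widehat{\mathbb{Z}}/n\widehat{\mathbb{Z}}\simeq\mathbb{Z}/n\mathbb{Z}$ and is again killed by $n.$

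I do not anticipate a serious obstacle: the argument uses only the non-emptiness of $X$ in characteristic zero (rational connectedness is not actually needed for this lemma, in contrast to its companion Lemma \ref{finite exp ker} on the kernel), and the whole proof reduces to a brief manipulation once the global 0-cycle of degree $n$ has been produced. The only mild check is the compatibility of the $\widehat{\mathbb{Z}}$-module action with the transition maps $CH_0(X_{k'})/m\to CH_0(X_{k'})/m'$ for $m'\mid m,$ but this is automatic from functoriality of reduction mod $m.$ One could alternatively invoke part (1) of Lemma \ref{finite exp}, but the direct $\widehat{\mathbb{Z}}$-linearity argument seems cleaner and gives the sharper statement that the \emph{same} integer $n$ works for both cokernels.
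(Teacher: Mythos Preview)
Your proof is correct and follows essentially the same approach as the paper: exhibit a global 0-cycle of degree $n$ on $X$ (the paper takes $n$ to be the index of $X$, you take $n=[k(P):k]$ for a single closed point $P$, which is a harmless difference), push it forward to every $X_{k'}$, and conclude. Your treatment of the completed cokernel via $\widehat{\mathbb{Z}}$-linearity is in fact more explicit than the paper's one-line ``tensoring with $\widehat{\mathbb{Z}}$'', and your remark that rational connectedness plays no role here is accurate.
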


\begin{proof}
We denote by $n$ the index of the variety $X_{/k},$ in particular there exists a 0-cycle of degree $n$ on $X.$
We have therefore a surjection $\mathbb{Z}/n\mathbb{Z}\to Coker(deg_{k'})$ for all extensions $k'$ of $k.$
By tensoring with $\widehat{\mathbb{Z}},$ we get also a surjection
$\widehat{\mathbb{Z}}/n\widehat{\mathbb{Z}}\to Coker(\widehat{deg}_{k'}).$
Hence $Coker(deg_{k'})$ and $Coker(\widehat{deg}_{k'})$ are both annihilated by $n.$
\end{proof}

\begin{lem}[\cite{Kollar-Szabo}, Corollary 9]\label{KS thm}
Let $X$ be a proper, smooth, rationally connected variety defined over a number field $k.$

Then there exists a finite subset $S\subset\Omega_k$ such that for  all $v\in\Omega_k\setminus S$
we have an isomorphism $deg_v:CH_0(X_v)\buildrel\simeq\over\to\mathbb{Z}.$
\end{lem}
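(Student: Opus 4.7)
The statement is a citation to Kollár--Szabó (Corollary 9 of their paper on rationally connected varieties over finite fields), so the plan is really to outline the strategy of their result rather than to reprove it from scratch. My plan combines a standard spreading-out argument with the finite-field $R$-equivalence theorem of Kollár--Szabó, together with a Henselian lifting argument to transfer the conclusion from the special fibre back to the generic fibre.

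First I would choose a finite subset $S_0\subset\Omega_k$ containing $\Omega_k^\infty$ and all places of bad reduction, and spread $X$ to a smooth proper model $\pi:\mathcal{X}\to\textrm{Spec}(O_{k,S_0})$. For every $v\notin S_0$, the special fibre $\overline{\mathcal{X}}_v$ is a smooth proper variety over the finite field $k(v)$; rational connectedness is preserved under specialisation in smooth proper families in characteristic zero (and more generally by the deformation theory of free rational curves), so $\overline{\mathcal{X}}_v$ is rationally connected. The key input is then the theorem of Kollár--Szabó: there is a constant $N=N(X)$, depending only on $X$, such that whenever $|k(v)|\geqslant N$ all $k(v)$-points of $\overline{\mathcal{X}}_v$ are $R$-equivalent; in particular $CH_0(\overline{\mathcal{X}}_v)\buildrel\simeq\over\to\mathbb{Z}$ via the degree map. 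Enlarging $S_0$ by the finitely many places with $|k(v)|<N$ yields the desired set $S$.

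For $v\notin S$, surjectivity of $\deg_v$ is routine: by Esnault's theorem (or Lang--Weil combined with rational connectedness) $\overline{\mathcal{X}}_v(k(v))\neq\emptyset$, and smoothness plus Hensel's lemma lifts such a point to $X_v(k_v)$, giving a 0-cycle of degree $1$. The heart of the matter is injectivity: given a zero-cycle $z$ on $X_v$ of degree $0$, one must show $z=0$ in $CH_0(X_v)$. The strategy is to use the specialisation homomorphism $sp_v:CH_0(X_v)\to CH_0(\overline{\mathcal{X}}_v)$ (Fulton, Intersection Theory, Chapter 20), which is compatible with degrees, so that $sp_v(z)\in A_0(\overline{\mathcal{X}}_v)=0$; and then to lift the $R$-equivalence between the reductions of the supports of $z^+$ and $z^-$ back to $X_v$, using that in a smooth proper family over a Henselian DVR, a chain of rational curves connecting two $k(v)$-points can be extended to a chain connecting lifts over $O_v$ (this is the deformation-theoretic content that underlies Kollár--Szabó's Corollary 9, relying on smoothing combs and Hensel's lemma applied to the parameter space of such chains).

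The main obstacle is precisely this last step: the specialisation map $sp_v$ is not injective in general, and one needs the rational connectedness and the smoothness of $\mathcal{X}_v$ over $O_v$ to Hensel-lift $R$-equivalences from the special fibre. I would not attempt to reprove this in detail; instead I would state it as a direct application of Kollár--Szabó, treating the spreading out and the reduction of the problem to their theorem as the content of the argument.
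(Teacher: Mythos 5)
Your proposal follows essentially the same route as the paper: spread $X$ out to a smooth proper model over $O_{k,S}$, invoke Koll\'ar--Szab\'o for the vanishing of the degree-zero part of $CH_0(X_v)$ at places of good reduction, and obtain surjectivity of $deg_v$ from Lang--Weil plus Hensel's lemma (the paper cites their Theorem 5 over the local field $k_v$ directly, whereas you factor it through the finite-field $R$-equivalence statement plus a Henselian lifting of $R$-equivalences, but both defer the same hard content to the same reference). The only point to tighten is that the special fibres live in positive characteristic, so the relevant property is \emph{separable} rational connectedness of the fibres, which is what the spreading-out result (Koll\'ar, IV.3) actually provides and what Koll\'ar--Szab\'o require.
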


\begin{proof}
Since $CH_0(-)$ is a birational invariant for proper smooth varieties, we can suppose that $X$ is smooth and projective
by Hironaka's resolution of singularity.
For a sufficiently large finite subset $S\subset\Omega_k,$ there exists a smooth projective morphism
$\mathcal{X}\to Spec(O_{k,S})$ such that its generic fiber is isomorphic to $X$ and all the fibers are separably
rationally connected, \emph{cf.} \cite[Theorem 3.11]{Kollar}. We refer to \cite[IV.3.]{Kollar} for the notion
of separably rational connectedness, the appropriate notion for varieties defined over a field of positive characteristic.
Thanks to a theorem of Koll\'ar/Szab\'o \cite[Theorem 5]{Kollar-Szabo},
we get $Ker(deg_v:CH_0(X_v)\to\mathbb{Z})=0$ for all $v\in\Omega_k\setminus S.$

Moreover, according to the Lang-Weil estimation, by enlarging
$S$ if necessary, we may assume that for all finite places $v\in\Omega_k\setminus S$ the reduction of
$\mathcal{X}$ modulo $v$ has a $k(v)$-rational point. Therefore $X_v$ admit a $k_v$-rational point
by Hensel's lemma, and we get the surjectivity of $deg_v:CH_0(X_v)\to\mathbb{Z}$
for all $v\in\Omega_k\setminus S.$
\end{proof}

\begin{proof}[Proof of Proposition \ref{ii' to E}]
Let $\{z_v\}_{v\in\Omega_k}\bot Br(X)$ be a family of local 0-cycles of degree $\delta.$ According to
the hypothesis, for any integer $n>0$ and any finite subset $S\subset\Omega_k$ there exists
a global 0-cycle $z=z_{n,S}$ of degree $\delta$ such that $z$ and $z_v$ have the same image
in $CH_0(X_v)/2n.$ Hence we obtain the following property for all finite subsets $S\subset\Omega_k.$
\begin{enumerate}
\item[$(P_S)$]
Let $\{z_v\}_{v\in\Omega_k}\bot Br(X)$ be a family of local 0-cycles of degree $\delta,$
then for any integer $n>0$ there exists a global 0-cycle $z=z_{n,S}$ of degree $\delta$ on $X$
such that $z=z_v$ in $CH_0(X_v)/n$ if $v\in S\cap\Omega_k^\textmd{f}$ and
$z=z_v+N_{\bar{k}_v|k_v}(u_v)$ in $CH_0(X_v)$ for a certain (class of) 0-cycle $u_v\in CH_0(\overline{X}_v)$
if $v\in S\cap\Omega_k^\infty.$
\end{enumerate}

We explain how to deduce the exactness of $(E)$ for $X$ from the property $(P_S).$
We start from an element $\{\hat{z}'_v\}_{v\in\Omega_k}$ of $\prod_{v\in\Omega_k}CH'_0(X_v)^\chapeau$$\mbox{ }$
sent to $0$ in the group $Hom(Br(X),\mathbb{Q}/\mathbb{Z}),$
the following lemma \ref{step1} shows that it can be written as a sum of the image of a certain element of
$CH_0(X)^\chapeau$$\mbox{ }$ and the image of a family $\{z'_v\}_{v}\in\prod_{v\in\Omega_k}CH'_0(X_v).$
Moreover $\{z'_v\}_{v}$ can be lifted to a family $\{z_v\}_v\in\prod_{v\in\Omega_k}CH_0(X_v)$ such that all components
have the same degree $\delta.$
Once this is done, according to Lemma \ref{KS thm}, we choose a finite subset $S\subset\Omega_k$
containing all the archimedean places $\Omega_k^\infty$ such that $CH_0(X_v)\simeq\mathbb{Z}$ for all
$v\notin S,$ and we choose a positive integer $n$ annihilating $A_0(X_v)$ for all $v\in\Omega_k$
by Lemma \ref{finite exp ker}. The property $(P_S)$ applied to $\{z_v\}_v$
for the integer $n$ will give tautologically the exactness of $(E)$ for $X.$
\end{proof}

In order to complete the proof, it suffices to prove the following lemma.

\begin{lem}\label{step1}
Every family $\{\hat{z}'_v\}_{v}\in\prod_{v\in\Omega_k}CH'_0(X_v)^\chapeau$$\mbox{ }$ orthogonal to $Br(X)$
can be written as a sum of the image of a family $\{z'_v\}_{v}\in\prod_{v\in\Omega_k}CH'_0(X_v)$
and the image of a certain element of $CH_0(X)^\chapeau\mbox{ }.$

Moreover, the family $\{z'_v\}_{v}$ can be lifted to a family in $\prod_{v\in\Omega_k}CH_0(X_v)$ such that all components
have the same degree.
\end{lem}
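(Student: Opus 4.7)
The plan is to analyze the local structure of the completed Chow groups for rationally connected varieties, lift the given family to completions of $CH_0$, and then reduce to a Pontryagin-duality computation that leaves only one degree-adjustment by a global class. First I would collect the structural inputs. By Lemma \ref{finite exp ker} I fix an integer $n_0>0$ annihilating $A_0(X_v)$ for every $v$, and by Lemma \ref{finite exp coker} I enlarge $n_0$ so that it also annihilates $\mathrm{Coker}(\deg_v)$ for every $v$. By Lemma \ref{KS thm} I fix a finite set $S_0\supset\Omega_k^\infty$ outside of which $\deg_v\colon CH_0(X_v)\to\mathbb{Z}$ is an isomorphism. Applying Lemma \ref{finite exp}(2) to the finite-exponent kernel $A_0(X_v)$ of $\deg_v$ yields $A_0(X_v)^\chapeau\simeq A_0(X_v)$ and a short exact sequence
\[
0\to A_0(X_v)\to CH_0(X_v)^\chapeau\to I_v\to 0,
\]
where $I_v$ is a closed subgroup of $\hat{\mathbb{Z}}$ of index dividing $n_0$, and $I_v=\hat{\mathbb{Z}}$ for $v\notin S_0$. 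The canonical map $CH_0(X_v)\to CH'_0(X_v)$ is the identity for non-archimedean $v$ and a surjection with divisible kernel at archimedean $v$; in either case it remains surjective on completions.

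Next I would lift $\{\hat{z}'_v\}_v$ to a family $\{\hat{z}_v\}_v\in\prod_v CH_0(X_v)^\chapeau$ and extract degrees $\hat{d}_v=\widehat{\deg}_v(\hat{z}_v)\in\hat{\mathbb{Z}}$. The crucial observation is that for any $b\in Br(k)$ with pullback $b_X\in Br(X)$, evaluation on a 0-cycle of degree $d$ returns $d\cdot b|_{k_v}$, so the orthogonality of $\{\hat{z}'_v\}_v$ to the image of $Br(k)\to Br(X)$ becomes
\[
\sum_{v\in\Omega_k}\mathrm{inv}_v\bigl(\hat{d}_v\cdot b|_{k_v}\bigr)=0\qquad\text{for every }b\in Br(k).
\]

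The crux is then a Pontryagin-duality computation using the global reciprocity sequence
\[
0\to Br(k)\to\bigoplus_v Br(k_v)\to\mathbb{Q}/\mathbb{Z}\to 0.
\]
Pontryagin duality identifies the dual of $\bigoplus_v\mathbb{Q}/\mathbb{Z}$ with $\prod_v\hat{\mathbb{Z}}$, and the annihilator of the image of $Br(k)$ is exactly the diagonal copy of $\hat{\mathbb{Z}}$. Consequently there is a single $\hat{d}\in\hat{\mathbb{Z}}$ with $\hat{d}_v=\hat{d}$ for every $v$. Since $I_v=\hat{\mathbb{Z}}$ for $v\notin S_0$, the value $\hat{d}$ lies in the image of the global degree $\widehat{\deg}\colon CH_0(X)^\chapeau\to\hat{\mathbb{Z}}$, and I lift it to some $\hat{z}\in CH_0(X)^\chapeau$. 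Replacing $\hat{z}_v$ by $\hat{z}_v-\hat{z}|_v$ kills every $\widehat{\deg}_v$, so the modified family lies componentwise in $A_0(X_v)^\chapeau=A_0(X_v)$ and is represented by an actual family $\{z'_v\}_v$ of 0-cycles of degree $0$. Its image in $\prod_v CH'_0(X_v)$ gives the desired family, and the ``moreover'' clause is immediate since the chosen representatives are all of degree $0$.

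The main obstacle I foresee is the Pontryagin-duality step, which has to be executed carefully at archimedean places where the local invariants live in $\tfrac{1}{2}\mathbb{Z}/\mathbb{Z}$ (real) or vanish (complex)---this is compatible with the definition of $CH'_0$ at those places but requires a small separate verification. A secondary subtlety is the existence of a global lift $\hat{z}\in CH_0(X)^\chapeau$ with prescribed completed degree $\hat{d}$, which amounts to checking that the global index of $X$ divides the lcm of local indices; for rationally connected $X$ this ultimately rests on Lemma \ref{KS thm} (and the Lang--Weil estimate inside its proof), which forces $\mathrm{ind}(X_v)=1$ outside a finite set.
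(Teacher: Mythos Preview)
Your overall strategy --- reduce to a degree computation via orthogonality to $Br(k)$, then subtract a global class --- is the same as the paper's. But there is a genuine gap at the crucial step where you assert that $\hat{d}$ lies in the image of $\widehat{\deg}\colon CH_0(X)^\chapeau\to\hat{\mathbb{Z}}$. That image is exactly $\mathrm{ind}(X)\cdot\hat{\mathbb{Z}}$, so you need $\mathrm{ind}(X)\mid\hat{d}$. What you actually know is only $\mathrm{ind}(X_v)\mid\hat{d}$ for every $v$, i.e.\ $\mathrm{lcm}_v\mathrm{ind}(X_v)\mid\hat{d}$. Your claim therefore amounts to $\mathrm{ind}(X)\mid\mathrm{lcm}_v\mathrm{ind}(X_v)$; the trivial divisibility goes the other way, and for an arbitrary rationally connected variety this equality is \emph{not} known --- it is essentially a piece of the very local-to-global principle one is trying to establish. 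Lemma~\ref{KS thm} only says $\mathrm{ind}(X_v)=1$ for almost all $v$, which holds for any geometrically integral proper variety and says nothing about the global index.

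The paper sidesteps this entirely. Instead of demanding $\hat{\delta}\in\mathrm{Im}(\widehat{\deg})$, it uses that $\mathrm{Coker}(\widehat{\deg})$ has finite exponent (Lemma~\ref{finite exp coker}) together with the divisibility of $\hat{\mathbb{Z}}/\mathbb{Z}$ to write
\[
\hat{\delta}=\delta+\widehat{\deg}(\hat{z}),\qquad \delta\in\mathbb{Z},\ \hat{z}\in CH_0(X)^\chapeau.
\]
After subtracting the image of $\hat{z}$, the residual family has \emph{integer} degree $\delta$ (not necessarily $0$), and one then invokes Lemma~\ref{finite exp} to show it arises from $\prod_v CH'_0(X_v)$, with lifts all of degree $\delta$. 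Your argument becomes correct once you replace ``lift $\hat{d}$ to $CH_0(X)^\chapeau$'' by this $\mathbb{Z}+\mathrm{Im}(\widehat{\deg})=\hat{\mathbb{Z}}$ trick; the rest of your outline (including the archimedean caveats you flag) then lines up with the paper's proof.
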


\begin{proof}
First of all, we consider the commutative diagram induced by functoriality for
the structural morphism $X\to Spec(k):$
\SelectTips{eu}{12}$$\xymatrix@C=20pt @R=14pt{
CH_0(X)^\chapeau\ar[d]^{\widehat{deg}}\ar[r]& \prod_{v\in\Omega_k}CH'_0(X_v)^\chapeau \ar[d]^{\prod_v\widehat{deg'_v}}\ar[r] & Hom(Br(X),\mathbb{Q}/\mathbb{Z})\ar[d]\\
CH_0(k)^\chapeau\ar[r]& \prod_{v\in\Omega_k}CH'_0(k_v)^\chapeau \ar[r] & Hom(Br(k),\mathbb{Q}/\mathbb{Z}).
}$$
The second line is exact since it is identified as
$$\widehat{\mathbb{Z}}\to\prod_{v\in\Omega^\textmd{f}}\widehat{\mathbb{Z}}\times\prod_{v\in\Omega^\mathbb{R}}\mathbb{Z}/2\mathbb{Z}\to Hom(Br(k),\mathbb{Q}/\mathbb{Z}),$$
which is the Pontryagin dual of the exact sequence coming from the class field theory
$$0\to Br(k)\to\bigoplus_{v\in\Omega_k}Br(k_v) \to\mathbb{Q}/\mathbb{Z}\to 0.$$
There exists then $\hat{\delta}\in\widehat{\mathbb{Z}}$ having image $\{\widehat{deg'_v}(\hat{z}'_v)\}_v$
in $\prod_{v\in\Omega^\textmd{f}}\widehat{\mathbb{Z}}\times\prod_{v\in\Omega^\mathbb{R}}\mathbb{Z}/2\mathbb{Z}.$

The cokernel $Coker(\widehat{deg}:CH_0(X)^\chapeau\to\widehat{\mathbb{Z}})$
is of finite exponent by lemma \ref{finite exp coker}, at the same time
$\widehat{\mathbb{Z}}/\mathbb{Z}$ is divisible, hence the homomorphism
$\mathbb{Z}\times CH_0(X)^\chapeau\to\widehat{\mathbb{Z}},(\delta,\hat{z})\mapsto\delta+\widehat{deg}(\hat{z})$
is surjective.
Therefore, we write $\hat{\delta}=\delta+\widehat{deg}(\hat{z})$ with $\delta\in\mathbb{Z}$ and
$\hat{z}\in CH_0(X)^\chapeau\mbox{ },$
by replacing $\{\hat{z}'_v\}_v$ by $\{\hat{z}'_v-\hat{z}\}_v$ if necessary, we may assume
that $\{\widehat{deg}_v(\hat{z}'_v)\}_v=\delta$ in
$\prod_{v\in\Omega^\textmd{f}}\widehat{\mathbb{Z}}\times\prod_{v\in\Omega^\mathbb{R}}\mathbb{Z}/2\mathbb{Z}$
with $\delta\in\mathbb{Z}.$

In this paragraph, we prove that the element $\{\hat{z}'_v\}_v$ comes from an element $\{z'_v\}_v$ of
$\prod_vCH'_0(X_v).$ In fact, the cokernel
$$Coker\left(\prod_vdeg'_v:\prod_vCH'_0(X_v)\To \prod_vCH'_0(k_v)=\prod_{v\in\Omega^\textmd{f}}{\mathbb{Z}}\times\prod_{v\in\Omega^\mathbb{R}}\mathbb{Z}/2\mathbb{Z}\right)$$
is of finite exponent (Lemma \ref{finite exp coker}),
Lemma \ref{finite exp}(1) implies that the image of $\delta$ in
$\prod_vCH'_0(k_v)=\prod_{v\in\Omega^\textmd{f}}{\mathbb{Z}}\times\prod_{v\in\Omega^\mathbb{R}}\mathbb{Z}/2\mathbb{Z}$
is equal to $\{deg'_v(z'_{0,v})\}_v$ for a certain $\{z'_{0,v}\}_v\in\prod_vCH'_0(X_v).$
Therefore $\{\widehat{deg'_v}(z'_{0,v}-\hat{z}'_v)\}_v=\delta-\delta=0$ in
$\prod_vCH'_0(k_v)^\chapeau=\prod_{v\in\Omega^\textmd{f}}{\widehat{\mathbb{Z}}}\times\prod_{v\in\Omega^\mathbb{R}}\mathbb{Z}/2\mathbb{Z}.$
For each $v\in\Omega^\textmd{f}_k,$ the kernel and the cokernel of $deg_v:CH_0(X_v)\to\mathbb{Z}$
are of finite exponent (Lemmas \ref{finite exp ker}, \ref{finite exp coker}),
we apply lemma \ref{finite exp}(2), this implies that $z'_{0,v}-\hat{z}'_v\in CH'_0(X_v)^\chapeau=CH_0(X_v)^\chapeau\mbox{ }$
comes from an element of $CH'_0(X_v)=CH_0(X_v),$ hence so is $\hat{z}'_v$ and we denote one of its preimages
by $z'_v\in CH'_0(X_v).$ For each $v\in\Omega_k^\mathbb{R},$ knowing that
the kernel and the cokernel of $deg'_v$ are of exponent $2,$ the same argument as above works
and we denote one of the preimages of $\hat{z}'_v$ by $z'_v\in CH'_0(X_v).$
For each $v\in\Omega_k^\mathbb{C},$ we set $z'_v=0\in CH'_0(X_v)=0.$

In the rest of the proof, we look for a lifting of each $z'_v\in CH'_0(X_v)$ to a 0-cycle of the same degree $\delta.$
Notice that the homomorphism
$$\prod_vCH'_0(k_v)=\prod_{v\in\Omega^\textmd{f}}{\mathbb{Z}}\times\prod_{v\in\Omega^\mathbb{R}}\mathbb{Z}/2\mathbb{Z}\To \prod_vCH'_0(k_v)^\chapeau=\prod_{v\in\Omega^\textmd{f}}\widehat{\mathbb{Z}}\times\prod_{v\in\Omega^\mathbb{R}}\mathbb{Z}/2\mathbb{Z}$$
is injective, the images of $\delta$ and of $\{z'_v\}_v$ are therefore the same in
$\prod_{v\in\Omega^\textmd{f}}{\mathbb{Z}}\times\prod_{v\in\Omega^\mathbb{R}}\mathbb{Z}/2\mathbb{Z},$
hence for each $v\in\Omega_k^\infty$ we can choose
in $CH_0(X_v)$ a preimage of $z'_v\in CH'_0(X_v)$ such that
it is of degree $\delta.$
\end{proof}

\section{Some remarks on the main results}\label{remark}

We give some remarks on the concerning topic: arithmetic of rational points versus arithmetic
of 0-cycles.

\subsection{}

The image of $Br(k)$ in $Br(X)$ contributes nothing to the Brauer-Manin pairing
for 0-cycles of the same degree, the sequence $(E_0)$ can be
rewritten as
$$A_0(X)^\chapeau\to\prod_{v\in\Omega}A_0(X_v)^\chapeau\to Hom(Br(X)/Br(k),\mathbb{Q}/\mathbb{Z}).$$
The quotient $Br(X)/Br(k)$ is finite if $X$ is rationally connected, Proposition \ref{Brauer group}, the
cokernel of the global-local mapping above is finite once $(E_0)$ is exact.

\subsection{}
Besides certain homogeneous spaces, our main results also apply to Ch\^atelet surfaces
and reprove the exactness of $(E)$ for these surfaces thanks to the work of Colliot-Th\'el\`ene/Sansuc/Swinnerton-Dyer
\cite[Theorem 8.11]{chateletsurfaces}. This is already known by Frossard \cite{Frossard}
and van Hamel \cite{vanHamel}.

\subsection{}
Even though the converse statement is correct for all proper smooth varieties,
it is still unknown, \emph{without assumption for base field extensions}, whether the statement that
the Brauer-Manin obstruction is the only obstruction to weak approximation for 0-cycles of degree $1$
implies the exactness of $(E)$ for rationally connected varieties. The only gap is passing from 0-cycles of degree $1$
to arbitrary degree $\delta$ with a certain argument \emph{staying on the base field}.
In \S \ref{ii to ii'}, we only prove that $\mbox{(0cyc-WA)}$ implies $\mbox{(0cyc-WA)}^\delta,$
where we need an assumption with base field extensions since when applying the fibration method to
$X\times\mathbb{P}^1\to\mathbb{P}^1$ we need to consider closed points of $\mathbb{P}^1$ in our argument.
We believe that the answer to the following question is positive.

\begin{pb*}
For rationally connected varieties $X,$ does the statement:
\small
\begin{enumerate}
\item[]
the Brauer-Manin obstruction is the only obstruction to weak approximation for 0-cycles of degree $1$ on $X$
\end{enumerate}\normalsize
imply the exactness of $(E)$ for $X$?
\end{pb*}

\subsection{}
Considering the statement (pt-WA), one may ask:
Is there a rationally connected $k$-variety $X$ such that the Brauer-Manin obstruction is the only
obstruction to weak approximation for rational points but it is not the case for $X_K$
for a certain finite extension $K/k$?

Counter-examples to the statement that the Brauer-Manin obstruction is the only
obstruction to weak approximation for rational points
were given by Skorobogatov \cite{Sk-beyond} and by Poonen \cite{Poonen}, they are
not rationally connected.
Our expected answer to this question is no, but it may have positive answer for more general varieties.
The following is proposed by Harari: We start from a $k$-bielliptic surface $X=(C_1\times C_2)/G,$
where $C_1$ and $C_2$ are smooth curves of genus $1$ and $G$ acts freely on $C_1\times C_2,$
such that $X(k_v)=\emptyset$ for a certain place $v$ of $k.$ It is trivially true that
the Brauer-Manin obstruction
is the only obstruction to weak approximation for $k$-rational points. We take a finite extension $K/k$
such that $C_1$ and $C_2$ are of positive rank over $K,$ then
$\overline{X(K)}\subsetneq \left[\prod_{w\in\Omega_K}X(K_w)\right]^{Br(X_K)},$
\cite[Corollary 6.3]{HarariENS}, the Brauer-Manin obstruction is not the only one to weak approximation
on $X_K.$

However, the following similar question seems easier to answer even in the family of rational varieties:

Is there a $k$-variety $X$ satisfying weak approximation for rational points but
it is not the case for $X_K$ for a certain finite extension $K/k$?

We consider the Ch\^atelet surface over a number field $k$ defined by the equation
$x^2-ay^2=P(z)$ where $P\in k[z]$ is a polynomial of degree $4$ and $a\in k^*\setminus k^{*2}.$
Assume that $P$ is irreducible over $k,$ then $Br(X)/Br(k)$ is trivial and hence weak approximation for $k$-rational points
holds for $X,$ \cite[Theorem 8.11]{chateletsurfaces}. The polynomial $P$ will become reducible
over a certain finite extension $K/k,$ one may have that $Br(X_K)/Br(K)$ is non-trivial, this may probably give
an obstruction
$\left[\prod_{w\in\Omega_K}X(K_w)\right]^{Br(X_K)}\subsetneq \prod_{w\in\Omega_K}X(K_w)$
to weak approximation on $X_K.$ Hopefully we can find some specific examples
by explicit calculations.

\subsection{}
Shall we expect the converse implications
$\mbox{(0cyc-HP)}\Rightarrow\mbox{(pt-HP)},$ $\mbox{(0cyc-WA)}\Rightarrow\mbox{(pt-WA)}$
and $\mbox{(\textsc{Exact})}\Rightarrow\mbox{(pt-HP,WA)}$?

The answer is negative if we do not restrict to a certain family of varieties.
Consider Poonen's $3$-fold \cite{Poonen}, it is a Ch\^atelet-surface bundle over an elliptic curve
of rank $0$ with finite Tate-Shafarevich group. The Brauer-Manin obstruction (even if applied to
\'etale coverings) is not the only obstruction to the Hasse principle for rational points, \cite{Poonen}.
But Colliot-Th\'el\`ene (respectively, the author) proved that,
it is the only obstruction to the Hasse principle (respectively, weak approximation)
for 0-cycles of degree $1,$ \cite[Theorem 3.1]{CTsurPoonen}, \cite[Theorems 2.1, 2.2]{Liang1};
the exactness of $(E)$ was also established by the author in \cite[Proposition 5.1]{Liang3}.
We also remark that exactness of $(E)$ was proved by Colliot-Th\'el\`ene for all curves
(assuming finiteness of Tate-Shafarevich groups), \cite[Proposition 3.3]{CT99HP0-cyc}, but
(pt-HP) is still a conjecture by Skorobogatov \cite{Skbook}.

Even for rationally connected varieties, the answer is not clear.
We consider a Severi-Brauer-variety bundle over $\mathbb{P}^n,$ it is a rational variety.
The author proved that the Brauer-Manin obstruction is the only obstruction
to the Hasse principle/weak approximation for 0-cycles of degree $1$ on such a variety, \cite[Corollary D]{Liang2}.
With Schinzel's hypothesis, Wittenberg proved the similar statement for rational points, \cite[Corollary 3.6]{WittenbergLNM}.
But it is still unknown without this additional hypothesis.

\subsection{}
Instead of considering the whole Brauer group, we may also consider obstructions
related to subgroups of the Brauer group or other types of obstructions. Here are
two usual considerations as follows.

\subsubsection{}
We may consider the algebraic Brauer-Manin obstruction related to the ``subgroup''
$Ker(Br(X)/Br(k)\to Br(\overline{X}))\simeq H^1(k,Pic(\overline{X}))$
of the whole Brauer group. Once the geometric Picard group $Pic(\overline{X})$ is torsion-free (hence finitely generated),
the comparison under base field extensions of Brauer groups holds, \emph{cf.} Proposition \ref{Brauer group}.
The same argument of Theorem \ref{prop i-ii} proves the following statement:
\begin{enumerate}
\item[] If the geometric Picard group $Pic(\overline{X})$ is torsion-free,
then the \emph{algebraic} Brauer-Manin obstruction is the only obstruction to the Hasse principle/weak approximation
for rational points on $X_K$ for all finite extensions $K/k$ implies that for
0-cycles of degree $1.$
\end{enumerate}
All K3 surfaces have torsion-free geometric Picard groups,
but unfortunately for general K3 surfaces the transcendental elements in
the Brauer group may really give an obstruction to weak approximation, \cite[Theorem 3.3]{Wittenberg04} and \cite{HVV}.
Up to now, the Brauer-Manin obstruction is not well understood for general K3 surfaces.

\subsubsection{}
We may also consider the subgroup of locally constant elements
$$\ba(X)=Ker\left(\frac{Br(X)}{Br(k)}\To\prod_{v\in\Omega_k}\frac{Br(X_v)}{Br(k_v)}\right),$$
it is a finite group once finiteness of Tate-Shafarevich groups of certain abelian varieties is supposed,
\cite[Proposition 2.14]{B-CT-Sk}. In several cases, the Brauer-Manin obstruction associated to this (conjecturally finite) part
is enough to explain the failure of the Hasse principle.

Eriksson/Scharaschkin prove that the $\ba(X)$-obstruction is the only obstruction to the
Hasse principle for 0-cycles of degree $1$ on any smooth curve $C$ assuming finiteness
of $\sha(Jac(C),k),$ \cite{Eriksson}. However, it is not the case for rational points.
In fact, consider a curve $C_{/k}$ without any $k$-rational points but with a global 0-cycle of degree $1,$
then there exists a family of local 0-cycles of degree $1$ orthogonal to $\ba(X).$
If moreover, the curve $C$ has $k_v$-points $x_v$ for all $v\in\Omega_k,$
then $\{x_v\}\bot \ba(X)$ since $\ba(X)$ consists of only the locally constant elements.
Therefore $\ba(X)$-obstruction is not the only obstruction to the Hasse principle for rational points.
For example, the equation $3x^4+4y^4=19z^4$ defines such a plane curve over $k=\mathbb{Q},$
\cite[p.128]{Skbook}.

In order to look for the relation between $\ba(X)$-obstruction for rational points
and for 0-cycles,
one may try to compare $\ba(X)$ under base field extension for a certain family of varieties,
but several difficulties come out. On the other hand, we have another approach.
Considering the Hasse principle, we should assume that $X$ has $k_v$-points for all places $v.$
Under this assumption, Wittenberg \cite[Theorem 3.3.1]{Wittenberg2008} proved that
the $\ba(X)$-obstruction vanishes for rational points (\emph{i.e.} there exists
a family of local rational points $\{x_v\}_{v\in\Omega_k}\bot\ba(X)$) if and only if the elementary
obstruction $ob(X)$ vanishes, assuming the finiteness of the Tate-Shafarevich group of the Picard
variety of $X$ over $k,$ \emph{cf.} \cite[\S 2]{Skbook} for the definition of the elementary obstruction.
The same proof shows that if there exists for each $v$ a 0-cycle of degree $1$ over $X_v,$ then
the $\ba(X)$-obstruction vanishes if and only if $ob(X)=0,$ assuming $\sha(\underline{Pic}^0_{X/k},k)<\infty.$

\begin{prop}
Let $X$ be a proper smooth variety defined over a number field $k.$ Suppose that
$\sha(\underline{Pic}^0_{X/k},K)<\infty$ for any finite extension $K/k.$

If $\ba(X_K)$-obstruction is the only obstruction to Hasse principle for $K$-rational points
on $X_K$ for any finite extension $K/k,$ then $\ba(X)$-obstruction is the only obstruction
to Hasse principle for 0-cycles of degree $1$ on $X.$
\end{prop}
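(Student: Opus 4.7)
The plan is to follow the structure of the proof of Theorem \ref{prop i-ii}, but to use the elementary obstruction $ob(X)$ --- as in Wittenberg's theorem recalled just before the proposition --- as a replacement for Proposition \ref{Brauer group}. The crucial feature of $ob$ for our purposes is its functoriality under base field extension, which directly supplies the analogue of Proposition \ref{Brauer group} that is needed when one passes from $X$ to $X_{k(\theta)}$.

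The first step is the reduction to the elementary obstruction. Given a family $\{z_v\}_{v\in\Omega_k}\bot\ba(X)$ of local $0$-cycles of degree $1$ on $X$, in particular local $0$-cycles of degree $1$ exist over every $X_v$; the $0$-cycle version of Wittenberg's theorem recalled above --- applicable because $\sha(\underline{Pic}^0_{X/k},k)<\infty$ --- then yields $ob(X)=0$. By functoriality of the elementary obstruction under base change, $ob(X_K)=0$ for every finite extension $K/k$.

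The second step mimics the fibration construction of Theorem \ref{prop i-ii}. Passing to the trivial fibration $\pi:X\times\mathbb{P}^1\to\mathbb{P}^1$ via the section $s:x\mapsto(x,0)$, using the moving lemma (Lemma \ref{moving lemma}) to replace $\{s_*(z_v)\}$ by effective $0$-cycles $z_v^3$ whose projections $\pi_*(z_v^3)$ are separable, and applying Krasner's lemma together with weak approximation on $k$, I would produce a closed point $\theta$ of $\mathbb{P}^1$ of some degree $\Delta\equiv 1\pmod{\delta_P}$, where $\delta_P$ is the degree of a fixed global closed point $P$ of $X\times\mathbb{P}^1$. The implicit function theorem at the places in a suitably chosen finite set $S_0$, combined with Hensel's lemma and the Lang-Weil estimate applied to an integral model at the places outside $S_0$, furnishes a $k(\theta)_w$-rational point on $X_{k(\theta)}\simeq\pi^{-1}(\theta)$ at every $w\in\Omega_{k(\theta)}$. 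Note that, unlike in Theorem \ref{prop i-ii}, no linear disjointness of $k(\theta)$ with an auxiliary extension is required here, since Proposition \ref{Brauer group} is no longer invoked; the only arithmetic condition on $\Delta$ is the congruence needed for the final degree adjustment.

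In the third step, Wittenberg's theorem for rational points is applied to $X_{k(\theta)}$: using $ob(X_{k(\theta)})=0$ from the first step, the existence of local rational points from the second step, and the hypothesis $\sha(\underline{Pic}^0_{X/k},k(\theta))<\infty$, one obtains a family of local rational points of $X_{k(\theta)}$ orthogonal to $\ba(X_{k(\theta)})$. The hypothesis of the proposition applied to the finite extension $k(\theta)/k$ then produces a $k(\theta)$-rational point on $X_{k(\theta)}$, which corresponds to a global closed point $M$ of $X\times\mathbb{P}^1$ of degree $\Delta\equiv 1\pmod{\delta_P}$; a suitable $\mathbb{Z}$-linear combination of $M$ and $P$ has degree $1$, and its image under the first projection $pr_*:CH_0(X\times\mathbb{P}^1)\to CH_0(X)$ is the desired global $0$-cycle of degree $1$ on $X$. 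The main obstacle I expect is essentially the bookkeeping in Step 2, namely ensuring local rational points on $X_{k(\theta)}$ at every place of $k(\theta)$; the argument is conceptually simpler than that of Theorem \ref{prop i-ii} precisely because the functoriality of $ob$ replaces the delicate comparison of Brauer groups under base change.
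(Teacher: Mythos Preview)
Your proposal is correct and follows essentially the same route as the paper's proof: both replace the Brauer-group comparison of Proposition \ref{Brauer group} by Wittenberg's equivalence between the vanishing of the $\ba$-obstruction and the vanishing of $ob(X)$, exploit the functoriality of $ob$ under base change to pass to $X_{k(\theta)}$, and then run the fibration argument of Theorem \ref{prop i-ii} without any linear-disjointness constraint on $\theta$. The only cosmetic difference is that the paper asserts directly that the constructed family $\{M_w\}_w$ is orthogonal to $\ba(\pi^{-1}(\theta))$ (which holds because elements of $\ba$ are locally constant, so orthogonality is independent of the choice of local points), whereas you invoke Wittenberg's theorem once more to produce \emph{some} orthogonal family; since only the Hasse principle is at stake, both formulations suffice.
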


\begin{proof}
With the same notation and the same argument as Theorem \ref{prop i-ii}, we start from a family
of local 0-cycles of degree $1$ with condition $\{z_v\}_v\bot\ba(X).$
From $\{s_*(z_v)\}\in\prod_{v\in\Omega_k}Z_0(X_v\times\mathbb{P}_v^1),$ under several procedures,
we get a family of local points $\{M_w\}_{w\in\Omega_{k(\theta)}}$ on the $k(\theta)$-variety
$\pi^{-1}(\theta).$ On the other hand, the existence of $\{z_v\}_v\bot\ba(X)$ implies that
$ob(X)=0,$ hence $ob(\pi^{-1}(\theta))=0$ by identifying $\pi^{-1}(\theta)$ with $X_{k(\theta)},$
\cite[Corollary 3.3.2]{Wittenberg2008}. Therefore $\{M_w\}_w\bot\ba(\pi^{-1}(\theta))$
according to the discussion before the proposition. The hypothesis gives us a $k(\theta)$-point
$M$ on $\pi^{-1}(\theta),$ which can be viewed as a global 0-cycle of degree $\Delta\equiv1(mod\mbox{ }\delta_P)$
on $X\times\mathbb{P}^1,$
we obtain a global 0-cycle of degree $1$ on $X\times\mathbb{P}^1$ and hence on $X.$
\end{proof}

\begin{cor}
Let $Y$ be a
homogeneous space of a connected linear algebraic group $G$ defined over a number field
and let $X$ be a smooth compactification of $Y.$ Suppose that the geometric stabilizer
of $Y$ is connected (or abelian if $G$ is supposed semisimple simply connected).

Then the $\ba(X)$-obstruction is the only obstruction to the Hasse principle for 0-cycles of degree $1$
on $X.$
\end{cor}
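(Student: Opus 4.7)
The plan is to apply the preceding proposition to $X$ and to each of its base-change variants $X_K$. This reduces the corollary to verifying, for every finite extension $K/k$, that (a) $\sha(\underline{Pic}^0_{X_K/K},K)<\infty$, and (b) the $\ba(X_K)$-obstruction is the only obstruction to the Hasse principle for $K$-rational points on $X_K$.

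Hypothesis (a) I would dispose of trivially. A homogeneous space of a connected linear algebraic group with connected (or abelian when $G$ is semisimple simply connected) geometric stabilizer is unirational, so any smooth compactification $X$ is unirational, hence rationally connected in characteristic zero by Koll\'ar--Miyaoka--Mori. Proposition~\ref{finiteness of Br} then yields $H^1(X_{\bar k},\mathcal O_{X_{\bar k}})=0$, so $\underline{Pic}^0_{X_K/K}=0$ for every finite extension $K/k$, and the Tate-Shafarevich group vanishes a fortiori.

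For hypothesis (b), I would invoke Borovoi's theorem~\cite{Borovoi96}: the hypotheses on $Y$ pass to the base change $Y_K$ (connectedness of the geometric stabilizer is preserved under base extension), so Borovoi's theorem applies to $X_K$ and says that the Brauer-Manin obstruction is the only obstruction to the Hasse principle for $K$-rational points. To upgrade this to the $\ba$-refinement needed by the preceding proposition, I would combine it with Wittenberg's theorem~\cite[Theorem 3.3.1]{Wittenberg2008}, recalled just before the proposition: assuming the finiteness of $\sha(\underline{Pic}^0_{X_K/K},K)$ (ensured by~(a)), the existence of a family $\{x_v\}\bot\ba(X_K)$ is equivalent to the vanishing of the elementary obstruction $ob(X_K)$. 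For our specific $X_K$, Borovoi's criterion additionally yields that $ob(X_K)=0$ implies the existence of a $K$-rational point on $X_K$; equivalently, $\ba(X_K)$-orthogonality of a family of local points already suffices to guarantee $X_K(K)\neq\emptyset$. This is precisely the $\ba$-form of (pt-HP) over every finite extension $K/k$.

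With (a) and (b) in place, the preceding proposition delivers the corollary with no further work. The main obstacle in this outline is the passage from the full Brauer-Manin version of Borovoi's theorem to its $\ba$-refinement: while Borovoi's statement is phrased in terms of the entire Brauer group, the bridge through Wittenberg's equivalence of $\ba$-obstruction with the elementary obstruction $ob(X_K)$, valid here thanks to rational connectedness, is the essential input that makes the reduction work cleanly.
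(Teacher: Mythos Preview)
Your overall strategy matches the paper's: deduce the corollary from the preceding proposition by checking its two hypotheses over every finite extension $K/k$. Your treatment of~(a) is correct --- $X$ is unirational, hence rationally connected, so $\underline{Pic}^0_{X_K/K}=0$ and the Tate--Shafarevich condition is vacuous.

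For~(b), however, the detour through Wittenberg's equivalence is unnecessary and, as you have written it, circular. Borovoi's theorem in~\cite{Borovoi96} is already proved in the $\ba$-form: the obstruction class he constructs lies in $\ba(X_K)$, so the statement ``the $\ba(X_K)$-obstruction is the only obstruction to the Hasse principle for $K$-points'' is exactly what one cites, and hypothesis~(b) is immediate. Your bridge instead assumes only the full-$Br$-version and then asserts that ``Borovoi's criterion'' yields $ob(X_K)=0\Rightarrow X_K(K)\neq\emptyset$; but the full-$Br$-version does not give this. By Wittenberg, $ob(X_K)=0$ is equivalent (given local points and~(a)) to the vanishing of the $\ba$-obstruction, \emph{not} of the full Brauer--Manin obstruction --- so concluding a $K$-point from $ob(X_K)=0$ is precisely the $\ba$-form of the Hasse principle you set out to prove. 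To make your route non-circular you would need an independent input for ``$ob(X_K)=0$ plus local points $\Rightarrow X_K(K)\neq\emptyset$'' on these homogeneous spaces, for instance \cite[Theorems~3.3, 3.8]{B-CT-Sk}, rather than the vague ``Borovoi's criterion''.

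The paper's subsequent Remark in fact points to that reference and sketches a shorter alternative that bypasses the proposition entirely: on the homogeneous spaces in question, the existence of a (global or local) $0$-cycle of degree~$1$ forces $ob=0$ by a restriction--corestriction argument, and then \cite{B-CT-Sk} yields a rational point directly.
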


\begin{rem}
It is known to experts that on (smooth compactifications of) certain homogeneous spaces (including all those discussed in
the present work) the existence of a (global or local) 0-cycle of degree $1$ is equivalent to
the existence of a rational point. Several different arguments were given by Borovoi, Demarche, Starr,
and the author. The easiest one is due to Starr: the elementary obstruction is of abelian nature
(hence one can use the restriction-corestriction argument), with the help of \cite[Theorem 3.3(or 3.8)]{B-CT-Sk}
one can conclude for all homogeneous spaces considered in \cite{B-CT-Sk}.
With this observation, we can also arrive at the previous corollary directly.
However, this is not always the case for an arbitrary homogeneous space, see \cite{Florence}
for counter-examples.
\end{rem}

%
%
%
%


\bibliographystyle{alpha}
\bibliography{mybib1}
\end{document}